\definecolor{couleur_cite}{rgb}{0.05,.4,0.05}
\definecolor{couleur_link}{rgb}{0.05,0.05,0.4}
\newtheorem{theorem}{Theorem}[section]
\newtheorem{lemma}[theorem]{Lemma}
\newtheorem{prop}[theorem]{Proposition}
\newtheorem{definition}[theorem]{Definition}
\theoremstyle{remark}
\newtheorem{remark}{Remark}
\newcommand{\R}{\mathbb R}
\newcommand{\C}{\mathbb C}
\newcommand{\N}{\mathbb N}
\newcommand{\Z}{\mathbb Z}
\newcommand{\Q}{\mathbb Q}
\newcommand{\A}{\mathbb A}
\newcommand{\ga}{\mathfrak a}
\newcommand{\qq}{\mathfrak q}
\newcommand{\dd}{\mathfrak d}
\newcommand{\NN}{\mathrm N}
\newcommand{\bK}{\mathbf K}
\newcommand{\SL}{\text{SL} }
\newcommand{\SO}{\text{SO} }
\newcommand{\GL}{\text{GL} }
\newcommand{\PSL}{\text{PSL} }
\newcommand{\tr}{\text{tr}}
\newcommand{\be}{\begin{equation}}
\newcommand{\ee}{\end{equation}}
\newcommand{\bes}{\begin{equation*}}
\newcommand{\ees}{\end{equation*}}
\renewcommand{\Re}{\mathrm{Re}}
\renewcommand{\Im}{\mathrm{Im}}
\renewcommand{\leq}{\leqslant}
\renewcommand{\geq}{\geqslant}
\renewcommand{\le}{\leqslant}
\numberwithin{equation}{section}
\title{Conductor zeta function for the $\GL(2)$ universal family}
\date{\today}
\renewcommand{\d}{\mathrm{d}}
\renewcommand{\mod}{\ \mathrm{mod} \ }
\author{Farrell Brumley}
\address{LAGA -- Institut Galil\'ee,
99 avenue Jean--Baptiste Cl\'ement,
93430 Villetaneuse,
France}
\email{brumley@math.univ-paris13.fr}
\author{Didier Lesesvre}
\address{School of mathematics (Zhuhai),
 Zhuhai Campus, Sun Yat-Sen University,
Tangjiawan, \\ \indent Zhuhai, Guangdong, 519082, China (PRC)}
\email{didier@mail.sysu.edu.cn, lesesvre@math.cnrs.fr}
\author{Djordje Mili\'cevi\'c}
\address{Bryn Mawr College, Department of Mathematics, 101 North Merion Avenue, Bryn Mawr, \\ \indent PA 19010, U.S.A.}
\email{dmilicevic@brynmawr.edu}
\begin{document}

\begin{abstract}
We obtain a Weyl law with power savings for the universal families of cuspidal automorphic representations, ordered by analytic conductor, of $\mathrm{GL}_2$ over $\Q$, as well as for Hecke characters over any number field. The method proceeds by establishing the requisite analytic properties of the underlying conductor zeta function.
\end{abstract}

\maketitle
\setcounter{tocdepth}{1}
%\tableofcontents
%\Large

\section{Introduction}
\label{sec:intro}

Weyl laws are central in the theory of automorphic forms in families \cite{sarnak_definition_2008}. The first instance of such a law in the non-compact setting was established, famously, by Selberg \cite{selberg_harmonic_1989} for Maass cusp forms of increasing eigenvalue for $\mathrm{SL}_2(\mathbb{Z})$. Since then, automorphic Weyl laws have been refined and extended for more general congruence quotients of noncompact locally symmetric spaces~\cite{lindenstrauss_existence_2007}. Of particular interest is to bound the error term in these asymptotics. An important milestone for subsequent applications is to  obtain a power savings error, and this has been recently achieved in the very general setting of reductive groups \cite{finis_remainder_2019}.

Relative to the entire cuspidal automorphic spectrum, these Weyl laws address \textit{partial} families, letting only the archimedean component of the representation vary. Moreover, this archimedean variation is often constrained to the spherical unitary dual. %Those results can therefore be seen as Weyl laws in the archimedean aspect.
Recently the first and last authors \cite{brumley_counting_2018} obtained the first uniform Weyl law for the \textit{universal} family consisting of all cuspidal automorphic representations of $\mathrm{GL}_2$ (with partial results for $\GL_n$). The second author \cite{lesesvre_counting_2020} obtained the uniform Weyl law for unit groups of quaternion algebras. In both of these works, when the quaternion algebra is not totally definite, the techniques employed yielded error terms with logarithmic savings. We obtain in this paper, in a classical language, a Weyl law for the universal family of $\mathrm{GL}_2$ over~$\mathbb{Q}$ with a power savings error term. As a motivation and an illustration of the technique, we also prove the Weyl law for the universal family of Hecke characters on $\GL_1$ over a number field.

Before stating our precise results, we indicate an important methodological difference between the approach of this paper and that of \cite{brumley_counting_2018}  and \cite{lesesvre_counting_2020}. A long-standing analogy, which can be  traced to the work of Drinfeld \cite{Drinfeld}, relates the problem of counting automorphic forms of bounded conductor and the one of counting rational points of bounded height on algebraic varieties. A standard method in the latter field is to use the height zeta function, whose analytic properties yield the counting law through Tauberian arguments \cite{chambert-loir_igusa_2010}. This analogy has been a guide to our method and we obtain our result by studying, via the trace formula, the zeta function associated with the analytic conductor, demonstrating the efficiency of this new approach in the automorphic setting. Indeed, this technical device allows us to circumvent the arduous truncation procedure used in \cite{brumley_counting_2018} to obtain sharp cut-offs, which was the primary cause of the logarithmic savings. The price we pay for this is the use of test functions in the trace formula whose archimedean components are of non-compact support.

\subsection{Main result}
Fix a number field $F$ and an integer $n\geq 1$. Let $\mathbb{A}_F$ be the adele ring of $F$. Let $\mathfrak{F}_n$ be the family of isomorphism classes of unitary cuspidal automorphic representations of $\GL_n(\mathbb{A}_F)$ over $F$, whose central characters are normalized to be trivial on the diagonally embedded copy of the positive reals. The object of our study is the counting function 
\[
N_{\mathfrak{F}_n}(X)=|\{\pi\in\mathfrak{F}_n: c(\pi)\leq X\}|,
\]
where~$c(\pi)$ is the analytic conductor. We wish to obtain an asymptotic expression for~$N_{\mathfrak{F}_n}(X)$, with a power savings error term, in the two specific cases mentioned above ($n\leq 2$, under the assumption that $F=\Q$ when $n=2$). These two settings are particularly amenable to our analysis, due to the wide class of admissible test functions for the underlying trace formulae (Poisson summation and the Selberg trace formula). 

We shall in fact want to allow some flexibility in the choice of the archimedean component of the analytic conductor. Indeed our methods rely upon the nice analytic properties of certain related notions of archimedean conductor which the Iwaniec--Sarnak definition does not enjoy. This is an important aspect of the current work, one of whose aims is to investigate which definition of archimedean conductor is best suited for counting results of this type.

To set up the main theorem we introduce some notation. Write $r_1, r_2$ for the number of inequivalent real and complex embeddings of $F$ and set $r=r_1+r_2-1$. Let $\xi_F$ be the completed Dedekind zeta function of $F$, and write $\xi^*_F(1)$ for its residue at $s=1$. Let \[
{\rm vol}[\GL_n]=D_F^{n^2/2}\xi_F^*(1)\xi_F(2)\cdots \xi_F(n)
\]
denote the volume of the canonical measure of the adelic quotient $\GL_n(F)\backslash \GL_n(\A_F)^1$, where~$\GL_n(\A_F)^1$ is the quotient of $\GL_n(\A_F)$ by the diagonal copy of the positive reals. Let~$\widehat{\mu}_{\mathfrak{F}_n}$ be the positive finite measure on the adelic unitary dual of $\Pi(\GL_n(\A_F)^1)$ defined in~\cite[\S 1.4]{brumley_counting_2018}, whose volume can be computed to be
\[
{\rm vol}(\widehat{\mu}_{\mathfrak{F}_n})=\frac{\zeta_F^*(1)}{\zeta_F(n+1)^{n+1}}\int_{\GL_n(\R)^{1\wedge,{\rm temp}}}\frac{\d m^{\rm pl}_\infty(\pi_\infty)}{c(\pi_\infty)^{n+1}}.
\]
Finally, we put 
\[\mathscr{C}(\mathfrak{F}_n)= \frac{1}{n+1} {\rm vol}[\GL_n] {\rm vol}(\widehat{\mu}_{\mathfrak{F}_n}).\]

\smallskip

In this note, we prove the following result. 

\begin{theorem}
\label{thm}
Let $n=1$ and let $F$ be an arbitrary number field. Let $c(\pi)$ be the \emph{axiomatic analytic conductor}, in the sense of Definition \ref{defn:GL1-arch-cond}. Then
\begin{equation}\label{prop:GL1}
N_{\mathfrak{F}_1}(X)=\mathscr{C}(\mathfrak{F}_1) X^2 + O_\varepsilon\big( X^{2 - \frac{2}{r+2} + \varepsilon} \big).
\end{equation}
Let $n=2$ and $F=\Q$. Let $c(\pi)$ be an \emph{admissible analytic conductor}, in the sense of Definition \ref{defn-c}. Then
\begin{equation}\label{thm-eq:GL2}
N_{\mathfrak{F}_2}(X)= \mathscr{C}(\mathfrak{F}_2) X^3 + O_\varepsilon\big(X^{3-\frac23 + \varepsilon}\big).
\end{equation}
\end{theorem}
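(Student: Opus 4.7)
The plan is to introduce the \emph{conductor zeta function}
\begin{equation*}
Z_{\mathfrak{F}_n}(s) = \sum_{\pi \in \mathfrak{F}_n} c(\pi)^{-s},
\end{equation*}
which converges absolutely in $\Re(s) > n+1$ (essentially the trivial Weyl law). The counting function $N_{\mathfrak{F}_n}(X)$ is recovered from $Z_{\mathfrak{F}_n}$ via Perron's formula, so the theorem reduces to establishing: (i) meromorphic continuation of $Z_{\mathfrak{F}_n}(s)$ to a half-plane $\Re(s) > n+1 - \sigma_0$ with a unique simple pole at $s = n+1$ of residue $(n+1)\mathscr{C}(\mathfrak{F}_n)$, and (ii) polynomial bounds $|Z_{\mathfrak{F}_n}(\sigma+it)| \ll (1+|t|)^A$ on vertical lines in that region. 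A standard truncated contour shift, balancing $X^{n+1-\sigma}T^A$ against $X^{n+1}/T$, then yields an error $X^{n+1 - \sigma/(A+1) + \varepsilon}$, and the stated exponents $2/(r+2)$ and $2/3$ dictate the requisite values of $\sigma$ and $A$.

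For $n=1$, I would carry out Fourier/Mellin analysis on the idele class group $F^\times \backslash \A_F^\times$. Each Hecke character $\chi \in \mathfrak{F}_1$ factors as $\chi = \prod_v \chi_v$; the axiomatic property of the archimedean conductor is precisely what guarantees that the archimedean Mellin transform $\int c(\chi_\infty)^{-s}\, \d\mu^{\mathrm{pl}}_\infty$ extends meromorphically, with a simple pole at $s=2$ and controlled polynomial growth in $\Im(s)$ (its $A$-exponent being driven by the $r$-dimensional split archimedean torus, hence the appearance of $r+2$). The non-archimedean part is a Dirichlet series over integral ideals weighted by the number of finite-part characters of prescribed conductor; this unfolds into an Euler product expressible in shifted Dedekind zeta values. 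The analytic continuation and growth of $\xi_F$ then transfer to $Z_{\mathfrak{F}_1}$.

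For $n=2$ over $\Q$, the tool is the Arthur--Selberg trace formula applied to a family of test functions $f_s = \bigotimes_v f_{s,v}$ whose trace against $\pi$ equals $c(\pi)^{-s}$. Non-archimedean components are assembled from projections onto new vectors, weighted so that $\tr \pi_p(f_{s,p}) = c(\pi_p)^{-s}$, yielding an Euler product at the finite places that reduces to shifted Riemann zeta values. The archimedean test $f_{s,\infty}$ is the inverse Plancherel transform of $c(\pi_\infty)^{-s}$; this function is not compactly supported, but the admissibility hypothesis on $c$ furnishes effective decay of its tail. The spectral side delivers $Z_{\mathfrak{F}_2}(s)$ up to contributions from the continuous and residual spectrum, which are to be shown holomorphic in the target region. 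The identity term on the geometric side supplies, via the Plancherel formula, the main pole with residue $3\mathscr{C}(\mathfrak{F}_2)$, while the elliptic, hyperbolic, and parabolic orbital integrals furnish error contributions polynomial in $|s|$.

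The main obstacle will be the geometric side in the $\GL(2)$ case, where non-identity orbital integrals must be controlled uniformly in $s$ against an archimedean test function of non-compact support. The crux is a quantitative tradeoff between the spread of $f_{s,\infty}$ on $\GL_2(\R)$ and the polynomial growth in $|\Im s|$ of its Plancherel transform; producing the correct admissibility axioms on $c$ to make this tradeoff sharp is precisely the role of Definition \ref{defn-c} (and, analogously, Definition \ref{defn:GL1-arch-cond} in the $\GL(1)$ case). It is also at this step that the present approach circumvents the lossy truncation of \cite{brumley_counting_2018}, trading it for a delicate analytic control that preserves the power savings.
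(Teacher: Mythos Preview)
Your outline is correct and matches the paper's strategy: conductor zeta function, Poisson summation for $\GL_1$, trace formula for $\GL_2$, then the Tauberian theorem of Appendix~\ref{sec:tauberian} (which is exactly your truncated Perron argument). The residue, the region of continuation, and the vertical-growth exponents you predict are precisely those the paper establishes.

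Two organizational differences are worth flagging. First, the paper does \emph{not} build a single adelic test function with $\tr\pi(f_s)=c(\pi)^{-s}$. Instead it splits $\mathfrak{F}_2$ by archimedean type, handling the discrete series $\mathcal{D}$ by the explicit dimension formula for $S_{\rm new}(k,N)$ (Proposition~\ref{prop:counting-law-discrete}), and reserving the trace formula for the weight~0 and weight~1 Maass contributions $\mathcal{P}_0$, $\mathcal{P}_1$, each via the classical Selberg trace formula on $Y_1(q)$. This sidesteps the need for an archimedean test function simultaneously adapted to all series. Second, at the finite places the paper does not project onto newvectors; it inserts the idempotent of $K_1(d)$, obtaining oldform multiplicities $\dim\pi^{K_1(d)}=\tau(d/c(\pi_f))$, and then M\"obius-inverts in the level (Lemma~\ref{lem:sieving}). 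Your newvector-projection version would work too, but the sieve is what produces the clean factor $\zeta(s-2)/\zeta(s)^3$ in the identity contribution. Finally, note that in the paper the hyperbolic term is controlled \emph{after} summing over levels, exploiting the divisibility $d\mid\det(\gamma-I)$; this is the step where the sum over $q$ genuinely helps, and your sketch (``orbital integrals furnish error contributions polynomial in $|s|$'') underplays that this is where the power saving is actually won.
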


In a follow-up paper, we intend to extend this result to $\GL_2$ over an arbitrary number field by appealing to the Arthur-Selberg trace formula of Finis-Lapid \cite{finis_arthur-selberg_2011}.

\subsection{Comments on the proof}

As was already mentioned, we approach Theorem \ref{thm} through the use of a conductor zeta function. A similar approach for a universal Weyl law for tori was adopted in \cite{petrow_weyl_2018}. From a technical standpoint, this requires working with \textit{non-standard} test functions in the Selberg trace formula. Such test functions, having non-compact support at the archimedean place, allow us to avoid the smoothing procedure of~\cite{brumley_counting_2018}, which, by contrast, was executed at every fixed level and relied upon test functions of compact, but expanding (in terms of the level) support.

On the other hand, for our test functions to be \textit{admissible}, in the sense of Selberg, we must be careful in the definition we take of archimedean conductor  for Maass forms. We shall review the properties we need for the archimedean conductor in Section \ref{sec:notation-conductor}, but hasten to point out that we require that it continues analytically in the spectral parameter just beyond that of the trivial representation. Neither the original definition of Iwaniec-Sarnak~\cite{iwaniec_perspectives_2000}, the axiomatic definition of Booker \cite{booker_l-functions_2015}, nor the log-conductor of Conrey-Farmer-Keating-Rubinstein-Snaith \cite{conrey_integral_2005}, measuring the density of zeros near the critical point, satisfy this latter property. We hope that our methods shed some light into the nature of what should be considered as the ``correct'' definition of archimedean conductor.

Finally, we point out that, in contrast to \cite{brumley_counting_2018}, in which the error terms incurred by the trace formula estimates and a de-smoothing procedure were shown to be small enough to~(just) survive a sum on levels, in this paper we in fact exploit the sum on levels when we estimate the most difficult contribution of the trace formula, namely the hyperbolic terms (see Section~\ref{sec:hyperbolic-contribution}).

\subsection{Structure of the paper}

We begin by establishing the Weyl law for the family of Hecke characters over general fields in Section \ref{sec:GL1-case}. In this case the conductor zeta function and its study by means of trace formulae are already the key of the argument. Section \ref{sec:notation} is dedicated to the normalization of measures and the introduction of the different series of representations arising in $\mathrm{GL}_2(\R)$. In Section \ref{sec:conductor-limiting-measure} we introduce the axiomatic definition of the analytic conductor, and explain the limiting measure displayed in Theorem \ref{thm}. In Section~\ref{sec:holomorphic-forms} we establish the Weyl law for the discrete series, and in Section \ref{sec:maass-forms} we do so for the different principal series. 

Some appendices have been added to keep the paper self-contained and ease the reading. The specific form of the Selberg trace formula we use for the Maass form contribution is recalled in Appendix \ref{sec:STF}. The Weyl laws for Maass forms are deduced from a sharp Tauberian theorem that we state and prove in Appendix \ref{sec:tauberian}. A discussion and comparison with the alternative approach of using the theory of analytic newvectors, due to Jana and Nelson~\cite{jana_analytic_2019}, is given in Appendix~\ref{sec:alternative-approaches}.

\section{The $\GL_1$ case}
\label{sec:GL1-case}

The goal of this section is to prove the asymptotic count \eqref{prop:GL1}, while simultaneously introducing in this more elementary setting some of the principles of the proof of the corresponding result \eqref{thm-eq:GL2} for $\GL_2$ over $\Q$.

Let $F$ be a number field of degree $d$, let $\mathcal{O}_F$ its ring of integers and $\mathbb{A}_F$ its ring of adeles. Denote~$F_\infty=F\otimes_\Q\R$. Write $r_1, r_2$ for the number of inequivalent real and complex embeddings of $F$. Finally, set $r=r_1+r_2-1$.

\subsection{Archimedean characters}\label{sec:arch-char}
Recall that $\mathfrak{F}_1$ is the set of normalized Hecke characters of $F$. If $\chi=\otimes_v\chi_v\in \mathfrak{F}_1$ then $\chi_\infty=\otimes_{v\mid\infty}\chi_v$ is a continuous character of $F^\times_\infty/\R_+$, where $\R_+$ is the diagonally embedded copy of the positive reals. We may identify such characters with those of the norm 1 subgroup $F_\infty^1=\{x\in F_\infty^\times: |x|_\infty=1\}$. In this paragraph we shall recall a natural parametrization of $F_\infty^{1,\wedge}$, the group of continuous complex characters of $F_\infty^1$.

We may parametrize $F_\infty^{1,\wedge}$ as follows. Let $\mathcal{M}=\{0,1\}^{r_1}\times \Z^{r_2}$. For $m\in \mathcal{M}$, we define the unitary character $\delta_m\in F_\infty^{1,\wedge}$ by $\delta_m(x)=\prod_{v\mid\infty} (x_v/|x_v|_v)^{m_v}$, where $|\cdot |_v$ is the absolute value for $v=\R$ and the complex modulus for $v=\C$. Let 
\[
\ga_0=\bigg\{y\in \prod_{v|\infty}\R: \sum_{v\mid\infty} d_v y_v=0\bigg\}\qquad \text{where} \ d_v=[F_v:\R];
\]
then $\ga_0$ is of dimension $r$. A continuous complex character of $F_\infty^1$ can be written uniquely as
\[
\chi_{m, \nu}(x)=\delta_m(x)e^{\langle i\nu,\log |x|\rangle} \qquad (m\in\mathcal{M}, \;\nu\in \ga_{0,\C}),
\]
where $\ga_{0,\C}=\ga_0\otimes\C$. We call $\delta_m$ the \textit{discrete component} of $\chi_{m,\nu}$.

\subsection{Archimedean conductor}

We now define the conductor of the character $\chi_{m,\nu}\in F_\infty^{1,\wedge}$. In most settings, the archimedean conductor of Iwaniec--Sarnak, defined as
\[
c_{\mathrm{IS}}(\chi_{m,\nu})=\prod_{v\mid\infty} (1+|m_v+i\nu_v|)^{d_v},
\]
is sufficient. However, we shall need a notion of archimedean conductor that has better analytic properties as a function of $\nu\in\mathfrak{a}_{0, \C}$. For this reason, in the setting of $\GL_1$, we find it convenient to use the following definition, to be found in \cite[Definition 1.3]{booker_l-functions_2015}. 

\begin{definition}\label{defn:GL1-arch-cond}
The \emph{axiomatic archimedean conductor} associated with $\chi_{m,\nu}$ is defined by 
\[
c_{\mathrm{ax}} (\chi_{m,\nu})=\exp\sum_{v\mid\infty}2 \mathrm{Re}\, \frac{\Gamma'_v}{\Gamma_v}\left(\frac{1}{2}+\frac{|m_v|}{d_v}+i\nu_v\right), 
\]
where $\Gamma_\R(s) = \pi^{-s/2}\Gamma(s/2)$ and $\Gamma_\C(s) = (2\pi)^{-s} \Gamma(s)$.
%\[
%c_{\mathrm{ax}} (\chi_{m,\nu})=\prod_{v\mid\infty}\exp\left({\frac{1}{2}\frac{\Gamma'_v}{\Gamma_v}\left(\frac{d_v}{4}+\frac{|m_v|+id_v\nu_v}2\right)+\frac{1}{2}\frac{\Gamma'_v}{\Gamma_v}\left(\frac{d_v}{4}+\frac{|m_v|-id_v\nu_v}2\right)}\right), 
%\]
\end{definition}

\begin{remark}\label{rem:GL1-cond-continuation}
As a function of $\nu\in \ga_0$, the axiomatic archimedean conductor extends to an analytic function in a rectangular tube about $\ga_0$ of side radius $1/2$ in $\ga_{0,\C}$. Indeed, the axiomatic conductor can be written as
\[
c_{\mathrm{ax}} (\chi_{m,\nu})=\exp \sum_{v\mid\infty}\left(\frac{\Gamma'_v}{\Gamma_v}\left(\frac{1}{2}+\frac{|m_v|}{d_v}+i\nu_v\right)+\frac{\Gamma'_v}{\Gamma_v}\left(\frac{1}{2}+\frac{|m_v|}{d_v}- i\overline{\nu_v}\right)\right).
\]
 This feature is of fundamental importance for us, in particular to obtain Lemma \ref{lem:BoundOnFourier} below, and motivates the definition of the analytic conductor for $\GL_2(\R)$ in Definition \ref{defn-c}.
\end{remark}

Let $m\in\mathcal{M}$ and $s\in \C$. Let $g_m$ be the function on $\ga_0$ defined by the Fourier transform
\begin{equation}
\label{DefinitionFourierTransform}
\begin{aligned}
g_m(x)=\int_{\ga_0}c(\chi_{m,\nu})^{-s}e^{- \langle i\nu,x\rangle}\,\d\nu.
\end{aligned}
\end{equation}
\pagebreak
\begin{lemma}\label{lem:BoundOnFourier} 
For any $\epsilon>0$ the integral in \eqref{DefinitionFourierTransform} converges uniformly and absolutely in the half-plane $\Re (s)\geq \frac{r}{r+1}+\epsilon$. Thus $g_m(x)$ defines in the half-plane $\Re (s)> \frac{r}{r+1}$ a holomorphic function of $s$. Moreover, letting $\|\cdot\|$ denote the euclidian norm on $\mathcal{M}$, there exists a $c_0>0$ such that for $\Re (s)> \frac{r}{r+1}$,
\[
g_m(x)\ll e^{-c_0\|x\|} (1+\|m\|)^{-\Re(s)}.
\]
\end{lemma}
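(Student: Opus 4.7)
The plan is to prove Lemma \ref{lem:BoundOnFourier} in two stages: a contour shift inside the complex tube of holomorphy of $c_{\mathrm{ax}}$ to extract the exponential decay $e^{-c_0\|x\|}$, followed by a Young-type convolution estimate on $\ga_0$ that yields both convergence for $\Re s>r/(r+1)$ and the polynomial decay $(1+\|m\|)^{-\Re s}$.

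For the first stage I would use Remark \ref{rem:GL1-cond-continuation}: the map $\nu\mapsto c_{\mathrm{ax}}(\chi_{m,\nu})^{-s}$ extends holomorphically to the tube $T=\{\nu\in\ga_{0,\C}:|\Im \nu_v|<1/2\}$, with the poles of the digamma factors lying strictly outside. Fixing $c_0\in(0,1/2)$ and $x\neq 0$, I would set $\eta=-c_0 x/\|x\|\in\ga_0$ and shift the contour from $\ga_0$ to $\ga_0+i\eta\subset T$. A standard justification (the integrand decays polynomially in $\|\Re\nu\|$ at rate $d\Re s$, so walls at $\|\Re\nu\|=R$ vanish as $R\to\infty$) applies once $\Re s>(r-1)/d$, which is automatic since $d\geq r+1$ under our hypothesis. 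The shift extracts $e^{\langle\eta,x\rangle}=e^{-c_0\|x\|}$, reducing matters to proving
\[
\int_{\ga_0}|c_{\mathrm{ax}}(\chi_{m,\nu+i\eta})|^{-\Re s}\,\d\nu \ll (1+\|m\|)^{-\Re s}.
\]
Direct estimation from the digamma formula gives $|c_{\mathrm{ax}}(\chi_{m,\nu+i\eta})|\gg\prod_v(1+|m_v|/d_v+|\nu_v|)^{d_v}$ uniformly for $\|\eta\|_\infty\leq c_0$, with implied constants depending on $c_0$.

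For the second stage I would rescale $\tilde\nu_v:=d_v\nu_v$ to identify $\ga_0$ with the sum-zero hyperplane in $\R^{r+1}$, and recognize the resulting integral, up to a harmless constant, as $(\tilde f_1*\cdots*\tilde f_{r+1})(0)$, where the even functions $\tilde f_v(t):=(1+(|m_v|+|t|)/d_v)^{-d_v\Re s}$. Young's inequality with the symmetric exponents $p_v=(r+1)/r$ (so that $\sum_v 1/p_v=r$) then gives
\[
(\tilde f_1*\cdots*\tilde f_{r+1})(0)\leq\prod_v\|\tilde f_v\|_{p_v}\ll\prod_v(1+|m_v|/d_v)^{r/(r+1)-d_v\Re s},
\]
and the finiteness of each $\|\tilde f_v\|_{p_v}$ demands precisely $\Re s>r/(d_v(r+1))$; the binding case is the real places ($d_v=1$), producing the abscissa $r/(r+1)$ in the lemma.

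To extract the sharp $(1+\|m\|)^{-\Re s}$ decay I would split the product by archimedean type. Real places contribute $O(1)$ because $m_v\in\{0,1\}$; at complex places ($d_v=2$) the exponent $r/(r+1)-2\Re s$ is negative and at most $-\Re s$ under $\Re s>r/(r+1)$, and the structure $\mathcal{M}=\{0,1\}^{r_1}\times\Z^{r_2}$ yields $\prod_{v\text{ complex}}(1+|m_v|/2)\gg 1+\|m\|$. Raising this to the negative exponent produces the desired $(1+\|m\|)^{-\Re s}$ factor, and holomorphy of $g_m(x)$ in $s$ on the half-plane follows from the uniform absolute convergence on compact subsets. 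The main subtlety, I expect, is exactly this final step: a naive symmetric Young application loses an extra $(1+\|m\|)^{r/(r+1)}$, and extracting the sharp decay crucially uses the specific shape of $\mathcal{M}$ (bounded real components, so $\|m\|$ is dictated by the complex places) together with the factor $d_v=2$ at those places.
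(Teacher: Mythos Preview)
Your proposal is correct. The contour-shift step matches the paper's proof: the paper shifts to $\ga_0-ic$ for an arbitrary $c$ in the open box $|c_v|<\tfrac12$ and then maximizes $\langle c,x\rangle$ over the vertices of the polytope, whereas you pick the direction $\eta=-c_0x/\|x\|$ directly; these are equivalent manoeuvres.

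The two arguments diverge in how they treat the remaining integral $\int_{\ga_0}|c_{\mathrm{ax}}(\chi_{m,\nu+i\eta})|^{-\sigma}\,\d\nu$. The paper quotes the volume estimate $\mathrm{vol}\{\nu\in\ga_0:c(\chi_{m,\nu})\le X\}\ll X^{r/(r+1)}$ from \cite[Lemma~6.4]{brumley_counting_2018} to obtain absolute convergence for $\sigma>r/(r+1)$, and then appeals to Stirling in a single sentence for the $m$-decay (the printed assertion that $c(\chi_{m,\nu})\gg\log\|m\|$ implies $c^{-s}\ll(1+\|m\|)^{-\sigma}$ appears to contain a typo and leaves the passage from a pointwise lower bound to the integrated estimate implicit). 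Your route via Young's inequality with symmetric exponents $p_v=(r+1)/r$ is genuinely different and more self-contained: it produces the convergence abscissa and the $(1+\|m\|)^{-\sigma}$ decay in one computation, and your observation that the real coordinates of $m$ lie in $\{0,1\}$ while the complex places carry $d_v=2$ is precisely what allows the exponent $r/(r+1)-2\sigma$ to be upgraded to $-\sigma$. The paper's approach is shorter because it outsources the volume bound; yours is fully explicit and clarifies why the particular shape of $\mathcal{M}$ enters.
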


\begin{proof}
According to \cite[Lemma 6.4]{brumley_counting_2018}, we have 
\[
\mathrm{vol}\{\nu\in\ga_0:c(\chi_{m,\nu})\leqslant X\}\ll X^{\frac{r}{r+1}}.
\]
Hence, the integral defining~$g(x)$ converges uniformly and absolutely in any half-plane of the form $\Re (s)\geqslant\frac{r}{r+1}+\delta$.

In view of Remark \ref{rem:GL1-cond-continuation}, for an arbitrary vector $c\in\R^d$ with $|c_v|<\frac12$ for all $v\mid\infty$, we may shift the contour in~\eqref{DefinitionFourierTransform} to $\ga_0-ic$, incurring no poles, and obtain
\[
g(x)=e^{-\langle c,x\rangle}\int_{\ga_0}c(\chi_{m,\nu-ic})^{-s}e^{-\langle i\nu,x\rangle}\,\text{d}\nu. 
\]
The problem of maximizing $\langle c,x\rangle$ subject to the conditions $|c_v|\leqslant\frac12-\delta$ has its solution in one of the finitely many vertices of a star-shaped polytope. The maximum is at least $c_0\|x\|$ for a fixed $c_0>0$.  Furthermore, by Stirling's formula we have the lower bound $c(\chi_{m, \nu}) \gg \log \|m\|$, so that $c(\chi_{m, \nu})^{-s}$ is bounded from above by $(1+\|m\|)^{-\Re(s)}$ as claimed.
\end{proof}

\subsection{Subfamilies}
Let $\chi=\otimes_v\chi_v\in \mathfrak{F}_1$. Let $\mathfrak{q}(\chi_f)\subset\mathcal{O}_F$ be the arithmetic conductor of $\chi_f$ and write $c(\chi_f)= \mathrm{N}\mathfrak{q}(\chi_f)\in\N$, where $\mathrm{N}$ is the norm on ideals of $F$. We define the \textit{analytic conductor} of $\chi$ by $c(\chi) = c(\chi_f) c(\chi_\infty)$, where $c(\chi_\infty)$ is the axiomatic analytic conductor. With this convention, one may then define the family $\mathfrak{F}_1(X)=\{\chi\in\mathfrak{F}_1: c(\chi)\leq X\}$, whose cardinality is counted by \eqref{prop:GL1}.

Our approach to proving \eqref{prop:GL1} will be to subdivide $\mathfrak{F}_1$ into continuous families, one for each discrete archimedean character $\delta_m$, as defined in \S\ref{sec:arch-char}. For $m\in\mathcal{M}$, we let
\[
\mathfrak{X}_m=\{\chi_{m,\nu}:\nu\in \ga_0\}
\]
denote the subset of unitary characters of $F_\infty^1$ with discrete component $\delta_m$. Clearly we have
\begin{equation}\label{eq:union-Xm}
F_\infty^{1,\wedge}=\coprod_{m\in\mathcal{M}}\mathfrak{X}_m.
\end{equation}
We give $\mathfrak{X}_m$ the Lebesgue measure $\d \nu$ induced by the natural identification of $\ga_0$ with $\R^{r-1}$. 

We decompose $\mathfrak{F}_1$ over $m\in \mathcal{M}$, by writing
\[
\mathfrak{F}_1=\coprod_{m\in\mathcal{M}}\mathfrak{F}_{\mathfrak{X}_m},
\]
where $\mathfrak{F}_{\mathfrak{X}_m}=\{\chi\in\mathfrak{F}_1: \chi_\infty\in\mathfrak{X}_m\}$. Accordingly, for every $m\in\mathcal{M}$ we put
\[
\mathfrak{F}_{\mathfrak{X}_m}(X)=\mathfrak{F}_{\mathfrak{X}_m}\cap\mathfrak{F}_1(X)\quad\textrm{ and }\quad N_m(X)=|\mathfrak{F}_{\mathfrak{X}_m}(X)|.
\]
From \eqref{eq:union-Xm} we have ${\rm vol}(\widehat{\mu}_{\mathfrak{F}_1})=\sum_{m\in\mathcal{M}}\widehat{\mu}_{\mathfrak{F}_1}(\mathfrak{X}_m)$, where $\widehat{\mu}_{\mathfrak{F}_1}(\mathfrak{X}_m)=\frac{\zeta_F^*(1)}{\zeta_F(2)^2}\int_{\mathfrak{X}_m}c(\chi_{m,\nu})^{-2}\d\nu$, as defined in Section \ref{sec:adelic-spectral-measure}.  The asymptotic count \eqref{prop:GL1} will follow from
\begin{equation}\label{eq:GL1-m-fixed-count}
N_m(X)=\frac12 {\rm vol} [\GL_1] \widehat{\mu}_{\mathfrak{F}_1}(\mathfrak{X}_m) X^2+O_\varepsilon( X^{2 - \frac{2}{r+2} + \varepsilon}).
\end{equation}

The remaining of this section is dedicated to prove this asymptotics.

\subsection{Conductor zeta function}
We shall show \eqref{eq:GL1-m-fixed-count} by establishing the nice analytic properties of the associated conductor zeta function
\begin{equation}
Z_m(s)=\sum_{\chi\in \mathfrak{F}_{\mathfrak{X}_m}}c(\chi)^{-s}.
\end{equation}
Indeed, the sharp asymptotic count \eqref{eq:GL1-m-fixed-count} follows from the following theorem, after applying the Tauberian Theorem \ref{thm:Tauberian}. 

\begin{prop}
The function $Z_m(s)$ satisfies the following properties:
\begin{enumerate}
\item it is holomorphic on $\sigma>2$;
\item it admits a meromorphic continuation to $\sigma>1$;
\item it has a unique pole in the right-half plane $\Re (s)>1$ located at $s=2$, which is simple and has residue ${\rm vol}[\mathrm{GL}_1] \widehat{\mu}_{\mathfrak{F}_1}(\mathfrak{X}_m)$;
\item it is of moderate growth in vertical strips. More precisely, uniformly in~$m \in~\mathcal{M}$, we have the bound $Z_m(s) \ll (1+\|m\|)^{-\sigma} (1+|s|)^{r/2}$.
\end{enumerate}
\end{prop}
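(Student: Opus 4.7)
The plan is to separate the finite part $\chi_f$ from the continuous archimedean parameter $\nu\in\ga_0$, apply Poisson summation to the sum over the unit lattice governing their compatibility, and read off the analytic structure of $Z_m(s)$ from a combination of the Fourier-type integrals $g_m(\cdot;s)$ of Lemma~\ref{lem:BoundOnFourier} against arithmetic Dirichlet series in $\chi_f$. Every $\chi\in\mathfrak{F}_{\mathfrak{X}_m}$ is specified by a finite character $\chi_f$ of $\prod_v\cO_v^\times$ together with a parameter $\nu\in\ga_0$, subject to the compatibility $\chi_f(\varepsilon)\delta_m(\varepsilon)e^{\langle i\nu,\log|\varepsilon|\rangle}=1$ for all $\varepsilon\in\cO_F^\times$. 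For a fixed $\chi_f$, the set of admissible $\nu$ forms a coset $\nu_0(\chi_f)+\Lambda$ of the cocompact lattice $\Lambda\subset\ga_0$ dual to the logarithmic embedding of $\cO_F^\times/\mathrm{tors}$. Poisson summation on $\Lambda$ then yields
\[
Z_m(s)=\frac{1}{{\rm vol}(\ga_0/\Lambda)}\sum_{\lambda\in\Lambda^\vee}g_m(\lambda;s)\,L_m(s;\lambda),
\qquad
L_m(s;\lambda)=\sum_{\chi_f}c(\chi_f)^{-s}e^{-i\langle\lambda,\nu_0(\chi_f)\rangle},
\]
the inner sum running over the countable set of compatible finite characters.

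The Dirichlet series $L_m(s;0)$ is identified, by M\"obius inversion and Euler-product factorization, with a ratio of Dedekind and Hecke $L$-values---essentially a constant multiple of $\zeta_F(s-1)/\zeta_F(s)^2$, up to finitely many local corrections accounting for compatibility with $\delta_m$---hence meromorphic on $\sigma>1$ with a unique simple pole at $s=2$ arising from $\zeta_F(s-1)$. For $\lambda\neq 0$, $L_m(s;\lambda)$ is a ratio involving nontrivial twisted Hecke $L$-functions, entire on that half-plane. Combined with the exponential decay $g_m(\lambda;s)\ll e^{-c_0\|\lambda\|}(1+\|m\|)^{-\sigma}$ of Lemma~\ref{lem:BoundOnFourier}, which dominates any polynomial growth in $\lambda$ of $L_m(s;\lambda)$, this yields absolute convergence of the $\lambda$-sum on $\sigma>1$ and thus items (1)--(3). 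The residue of $Z_m$ at $s=2$ is ${\rm vol}(\ga_0/\Lambda)^{-1}g_m(0;2)\cdot\mathrm{Res}_{s=2}L_m(s;0)$; since $g_m(0;2)=\int_{\ga_0}c(\chi_{m,\nu})^{-2}\,d\nu$, comparing with the explicit formula for $\widehat{\mu}_{\mathfrak{F}_1}(\mathfrak{X}_m)$ and re-collecting the adelic volume factors recovers ${\rm vol}[\GL_1]\widehat{\mu}_{\mathfrak{F}_1}(\mathfrak{X}_m)$ as the required residue.

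The uniform prefactor $(1+\|m\|)^{-\sigma}$ in item (4) carries over directly from Lemma~\ref{lem:BoundOnFourier}. The $(1+|s|)^{r/2}$ polynomial growth comes from a more refined analysis of $g_m(\cdot;s)$ as $|\Im s|\to\infty$, via stationary-phase or further contour-shift arguments combined with standard convexity bounds on Hecke and Dedekind $L$-functions in vertical strips. The main technical obstacle is the clean arithmetic identification of $L_m(s;\lambda)$ with Hecke $L$-functions---the Fourier dual parameter $\lambda\in\Lambda^\vee$ must be matched to the archimedean component of a twisting Hecke character, with all local Euler factors correctly tracked through the ramification at $\delta_m$---together with the meticulous measure-theoretic bookkeeping needed to match the residue at $s=2$ with the adelic volume ${\rm vol}[\GL_1]\widehat{\mu}_{\mathfrak{F}_1}(\mathfrak{X}_m)$.
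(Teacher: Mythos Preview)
Your approach shares the same core idea as the paper's proof---Poisson summation yielding a sum indexed by units---but the paper organizes it more cleanly and avoids the obstacles you flag. Rather than fixing $\chi_f$ and applying Poisson on the archimedean lattice $\Lambda$, the paper applies Poisson summation directly for the lattice $F^\times\subset\A_F^1$, with test function $\varepsilon_{U(\dd)}\otimes h_m^\vee$. The geometric side is then a sum over $u\in\cO_F^\times(\dd)$ (units congruent to $1$ modulo $\dd$), and the arithmetic sum over conductors $\qq$ and divisors $\dd\mid\qq$ is kept outside. Splitting off the $u=1$ term gives exactly
\[
Z_m^1(s)=\mathrm{vol}[\GL_1]\,\frac{\zeta_F(s-1)}{\zeta_F(s)^2}\int_{\mathfrak{X}_m}c(\chi_\infty)^{-s}\,\d\chi_\infty,
\]
from which items (1)--(3) and the residue are read off immediately, with no measure-theoretic bookkeeping beyond the adelic Poisson normalization.

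Two points in your outline are not quite right. First, your claim that $L_m(s;\lambda)$ for $\lambda\neq 0$ is a twisted Hecke $L$-function is a misidentification. If $\lambda\in\Lambda^\vee$ corresponds to $\log|u|$ for a unit $u\neq 1$, then the compatibility relation forces $e^{-i\langle\lambda,\nu_0(\chi_f)\rangle}=\chi_f(u)\delta_m(u)$, so $L_m(s;\lambda)=\delta_m(u)\sum_{\chi_f}c(\chi_f)^{-s}\chi_f(u)$. By orthogonality this is supported on conductors dividing (the ideal generated by) $u-1$: it is a \emph{finite} Dirichlet sum, not an $L$-function. The paper exploits exactly this, bounding the $u\neq 1$ contribution directly via $g_m(\log|u|)\ll e^{-c_0\|\log|u|\|}(1+\|m\|)^{-\sigma}$ and the divisor bound $d_{1-\sigma}(u-1)\ll e^{\varepsilon\|\log|u|\|}$; no $L$-function identification is needed or helpful. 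Second, the vertical growth $(1+|s|)^{r/2}$ in item (4) does not come from a refined analysis of $g_m$; it comes entirely from the convexity bound for $\zeta_F(s-1)$ in the $u=1$ term. The $u\neq 1$ contribution is uniformly bounded in vertical strips.

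Your parametrization also needs care: specifying $\chi_f$ as a character of $\prod_v\cO_v^\times$ omits the ideal class group, and handling of torsion units must be made explicit. The paper's adelic formulation absorbs both issues automatically.
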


Note that the above bound ensures that the sum of $Z_m(s)$ over $m \in \mathcal{M}$ converges uniformly in any half-plane $\sigma > 1+\delta$, so that the counting laws \eqref{eq:GL1-m-fixed-count} can be summed over $m$.

\begin{proof}
By splitting the sum according to non-archimedean conductor, we obtain
\begin{align*}
Z_m(s)&=\sum_{\qq}\NN\qq^{-s}\sum_{\substack{\chi\in \mathfrak{F}_{\mathfrak{X}_m}\\\qq(\chi_f)=\qq}}c(\chi_{\infty})^{-s} =\sum_{\qq}\NN\qq^{-s}\sum_{\dd\mid\qq}\mu\left( \frac{\qq}{\dd}\right)\sum_{\substack{\chi\in \mathfrak{F}_{\mathfrak{X}_m}\\ \qq(\chi_f)|\dd}}c(\chi_{\infty})^{-s}.
\end{align*}
We apply Poisson summation to the last sum. To set this up, we begin by defining a function~$h_m$ on $F^{1,\wedge}_\infty$ to take the value $c(\chi_{\infty})^{-s}$ if $\chi_\infty\in\mathfrak{X}_m$ and $0$ otherwise. Next, let~$U(\dd)$ be the open compact subgroup of $\A_{F,f}^\times$ which at each finite place is equal to the group of local units congruent to $1$ mod $\dd$ and let $\widehat{\varepsilon}_{U(\dd)}$ denote the Mellin transform of the normalized characteristic function $\varepsilon_{U(\dd)}={\rm vol}(U(\dd))^{-1}{\bf 1}_{U(\dd)}$. Then, assuming the measure on $\A_{F,f}^\times$ is normalized to give the local units volume 1, we obtain
\[
\widehat{\varepsilon}_{U(\dd)}(\chi_f)={\rm vol}(U(\dd))^{-1} \int_{U(\dd)}\chi_f^{-1}(u)\d^\times u=
\begin{cases}
1,& \qq(\chi_f)\mid\dd;\\ 
0,& \textrm{else}.
\end{cases}
\]
Write $H_{\dd,m}=\widehat{\varepsilon}_{U(\dd)}\otimes h_m$. With this notation, we may write 
\[
\sum_{\substack{\chi\in \mathfrak{F}_{\mathfrak{X}_m}\\ \qq(\chi_f)|\dd}}c(\chi_{\infty})^{-s}=\sum_{\chi\in \A_F^{1,\wedge}}H_{\dd,m}(\chi).
\]
Let $h_m^\vee$ be the inverse Mellin transform of $h_m$. Thus for $x\in F_\infty^1$ we have
\[
h_m^\vee(x)=\int_{F^{1,\wedge}_\infty}h_m(\chi_\infty)\chi_\infty^{-1}(x)\d\chi_\infty=\delta_m^{-1}(x)\int_{\ga_0}c(\chi_{m,\nu})^{-s}e^{-\langle i\nu,\log |x|\rangle}\d\nu.
\]
Then Poisson summation for the lattice $F^\times$ inside $\A^1_F$ states that
\[
\sum_{\chi\in \A_F^{1,\wedge}}H_{\dd,m}(\chi)={\rm vol}[\GL_1] \sum_{x\in F^\times} \left(\varepsilon_{U(\dd)}\otimes h_m^\vee\right)(x)={\rm vol}[\GL_1]\varphi(\dd)\sum_{u\in \mathcal{O}_F^\times(\qq)} h_m^\vee(u),
\]
where $\mathcal{O}_F^\times(\qq)=F^\times\cap U(\qq)=\{u\in\mathcal{O}_F^\times: u\equiv 1\mod\qq\}$ and $\varphi(\dd)=(\mu\star \mathrm{N})(\dd)={\rm vol}(U(\dd))^{-1}$. Thus we get
\begin{equation}
\label{ZF-transformed}
Z_m(s)={\rm vol}[\GL_1] \sum_{\qq}\NN\qq^{-s}\sum_{\dd\mid\qq}\mu\left(\frac{\qq}{\dd}\right) \varphi(\dd)\sum_{u\in\mathcal{O}_F^\times(\dd)}h_m^\vee(u).
\end{equation}

Splitting the sum \eqref{ZF-transformed}, we write $Z_m(s)=Z_m^1(s)+Z_m^{\neq 1}(s)$, where $Z_m^1(s)$ represents the contribution from $u=1$ in \eqref{ZF-transformed} and $Z_m^{\neq 1}(s)$ collects the contributions from $u\neq 1$.

For the $u=1$ summand, we apply Mellin inversion to $h_m^\vee(1)$, and we recognize the sum on $\dd\mid\qq$ as an arithmetic convolution $(\mu\star\mu\star\NN)(\qq)$. We therefore find
\begin{align*}
Z_m^1(s)&={\rm vol}[\GL_1] \sum_{\qq}\frac{(\mu\star\mu\star\NN)(\qq)}{\NN\qq^s}\ \int_{F_{\infty}^{1,\wedge}}h_m(\chi_{\infty})\,\text{d}\chi_{\infty}\\
&={\rm vol}[\GL_1] \frac{\zeta_F(s-1)}{\zeta_F(s)^2}\int_{\mathfrak{X}_m}c(\chi_{\infty})^{-s}\,\text{d}\chi_{\infty}.
\end{align*}
Finally, $Z_m^1(s)$ extends to a meromorphic function in the region $\Re (s)>1$ with the only pole at $s=2$, which is a simple pole with residue ${\rm vol}[\GL_1] \widehat{\mu}_{\mathfrak{F}_1}(\mathfrak{X}_m)$. From the vertical growth of the Riemann zeta function and the bound $c(\chi_{m, \nu}) \gg \|m\|$ we also deduce that, for $\sigma > 1$, we have  $Z_m^1(s) \ll (1+\|m\|)^{-\sigma} (1+|s|)^{r/2}$.

Note that $h_m^\vee(x)=\delta_m^{-1}(x)g_m(\log |x|)$ with $g_m$ defined by \eqref{DefinitionFourierTransform}. Using Lemma \ref{lem:BoundOnFourier}, we deduce that $Z_m^{\neq 1}(s)$ is dominated by
\begin{align*}
|Z_m^{\neq 1}(s)|&\leqslant {\rm vol}[\GL_1]  (1+\|m\|)^{-\sigma}\sum_{\qq}\NN\qq^{-\sigma}\sum_{\dd\mid\qq}\varphi(\dd)\sum_{\substack{u\in\mathcal{O}_F^{\times}\setminus\{1\}\\\varepsilon\equiv 1\bmod\dd}}e^{-c_0\|\log|u|\|}\\
& \leqslant {\rm vol}[\GL_1] \zeta_F(\sigma) (1+\|m\|)^{-\sigma} \sum_{u\in\mathcal{O}_F^{\times}\setminus\{1\}}e^{-c_0\|\log|u|\|}d_{1-\sigma}(u-1).
\end{align*}
uniformly in $m \in \mathcal{M}$. Writing $d_{1-\sigma}(u-1)\ll_{\varepsilon}\NN(u-1)^{\varepsilon}\ll_{\varepsilon}e^{\varepsilon\|\log|u|\|}$ to bound the divisor function and collecting everything, we find that $Z_m^{\neq 1}(s)$ is essentially bounded by $\zeta_F(\sigma)$ multiplied by $(1+\|m\|)^{-\sigma}$. In particular, the series defining~$Z_m^{\neq 1}(s)$ converges uniformly to a holomorphic function bounded by $(1+\|m\|)^{-\sigma}$ in any half-plane $\Re(s)\geqslant 1+\delta$.
\end{proof}

\section{$\GL_2$ preliminaries}
\label{sec:notation}

In this section we recall standard group decompositions of $\GL_2$, as a group over $\Q$, normalize Haar measures, describe the classification of local representations, and fix normalizations of local Plancherel measures. From now on we concentrate on the case of $\GL_2$ so we let~$\mathfrak{F} =  \mathfrak{F}_2$ and $\A=\A_\Q$ to lighten notations.

\subsection{Group decompositions}\label{sec:gp-decomp}
Let $G=\GL_2$, viewed as an algebraic group over $\Q$. Let~$B$ be the standard Borel subgroup of upper triangular matrices in $G$, and $N$ its unipotent radical. Let $T$ denote the diagonal torus. At finite places $p$ we write $\bK_p=\GL_2(\mathbb{Z}_{p})$; at the archimedean place we put $\bK_\infty={\rm O}(2)$ and $\bK_\infty^+=\SO(2)$. Then $\bK=\prod_v \bK_v$ is a maximal compact subgroup of $G(\A)$; let $\bK^+=\prod_p \bK_p\bK_\infty^+$. We have the local and global Iwasawa decompositions 
\begin{equation}\label{eq:local-Iwasawa}
G(\Q_p)=N(\Q_p)T(\Q_p)\bK_p,\qquad G(\R)=N(\R)T(\R)\bK_\infty=N(\R)T(\R)\bK_\infty^+,
\end{equation}
and
\[
G(\A)=N(\A)T(\A)\bK=N(\A)T(\A)\bK^+.
\]

Let $A_G=\{\left(\begin{smallmatrix}a & \\ & a\end{smallmatrix}\right): a\in \R_+\}$ denote the connected component of the split part of the center of $\GL_2(\R)$. Let $G(\A)^1=\{g\in G(\A): |\det g|_\A=1\}$ and $G(\R)^1=G(\A)^1\cap G(\R)$. Then $G(\R)^1$ (sometimes also denoted $\SL_2^\pm(\R)$) consists of the matrices $g\in\GL_2(\R)$ such that $\det g=\pm 1$. The natural matrix multiplication maps yield isomorphisms
\begin{equation}\label{G-G1}
G(\A)^1\times A_G\simeq G(\A),\quad G(\R)^1\times A_G\simeq G(\R),
\end{equation}
as well as
\begin{equation}\label{T-T1}
(T(\R)\cap G(\R)^1)\times A_G\simeq T(\R),\quad (T(\A)\cap G(\A)^1)\times A_G\simeq T(\A).
\end{equation}
Moreover, we have corresponding Iwasawa decompositions on $G(\R)^1$ and $G(\A)^1$ given by
\[
G(\R)^1=N(\R)(T(\R)\cap G(\R)^1)\bK_\infty^+\quad\textrm{ and }\quad G(\A)^1=N(\A)(T(\A)\cap G(\A)^1)\bK^+.
\]

\subsection{Haar measure normalizations}\label{sec:Haar}

For every place $v$ of $\Q$ we let $\d x_v$ be the Haar measure on $\Q_v$ given by the standard Lebesgue measure when $v=\R$ and the normalized Haar measure for which $\Z_p$ has volume $1$, when $v=p$ is finite. We identify $\Q_v$ with $N(\Q_v)$ by sending $x\in\Q_v$ to $\left(\begin{smallmatrix} 1 & x \\ & 1\end{smallmatrix}\right)$; the push forward of $\d x_v$ then defines a Haar measure $\d n_v$ on $N(\Q_v)$.

Similarly, we put a measure $\d^\times x_v$ on each $\Q_v^\times$, by taking $\d^\times x_\infty=\d x_\infty/|x_\infty|$ when $v=\infty$ and $\d^\times x_p=\zeta_p(1)\d x_p/|x_p|_p$ when $v=p$; note that in the latter case, $\Z_p^\times$ is given unit volume. This choice of measures induces one on $T(\Q_v)=\{\left(\begin{smallmatrix}a & \\ & b\end{smallmatrix}\right): a,b\in \Q_v^\times\}$, which we denote by~$\d t_v$, through its natural identification with $(\Q_v^\times)^2$.

Giving $\mathbf{K}_p$ and $\mathbf{K}_\infty^+$ the probability Haar measure, we then obtain a left-invariant Haar measure $\d g_v$ on $G(\Q_v)$ via the Iwasawa decomposition in \eqref{eq:local-Iwasawa}: $\d g_v=\d a_v \d n_v \d k_v$. Note that, for any prime $p$, this measure assigns $\mathbf{K}_p$ volume 1. This then defines a left-invariant Haar measure $\d g=\prod_v \d g_v$ on $G(\A)$.

We now endow $A_G$ with the Haar measure obtained by pushing forward $\d^\times t=\d t/t$ on $\R_+$ via $t\mapsto\left(\begin{smallmatrix} t & \\ & t\end{smallmatrix}\right)$. Since we have already chosen measures on $G(\R)$, $G(\A)$, $T(\R)$, and $T(\A)$ this induces measures on $G(\R)^1$, $G(\A)^1$, $(T(\R)\cap G(\R)^1)$, and $(T(\A)\cap G(\A)^1)$ via the topological group isomorphisms \eqref{G-G1} and \eqref{T-T1}.

Let ${\rm vol}[\GL_2]$ denote the volume assigned to the automorphic quotient $G(\Q)\backslash G(\A)^1$ by the above choice of measures. Then
\begin{equation*}\label{GL2-global-volume}
{\rm vol}[\GL_2]=\xi(2)=\frac{\pi}{6}.
\end{equation*}
The computation of this volume is classical, and can be deduced from \cite[Corollary 7.45]{knightly_traces_2006} after having taken into account their measure conventions.
\subsection{Normalization of Plancherel measure}
The choice of Haar measures in the preceding section fixes a normalization of Plancherel measure on the unitary dual, as we now recall.

For a place $v$ of $\Q$ we denote by $G(\Q_v)^\wedge$ for the unitary dual of $G(\Q_v)$, endowed with the Fell topology. We further let $G(\Q_v)^{\wedge,\textrm{temp}}$ and $G(\Q_v)^{\wedge,\textrm{gen}}$ be the tempered and generic unitary duals, endowed with the relative topologies. We have the following inclusions:
\[
G(\Q_v)^{\wedge,\textrm{temp}}\subset G(\Q_v)^{\wedge,\textrm{gen}}\subset G(\Q_v)^\wedge.
\]

For $\pi_v\in G(\Q_v)^\wedge$ and $f\in C_c^\infty(G(\Q_v))$ we put
\[
\pi_v(f)=\int_{G(\Q_v)}\pi_v(g_v) f(g_v)\d g_v,
\]
with Haar measure normalized as in \S\ref{sec:Haar}. This is a trace class operator on the space of $\pi_v$. We may define the distributional character of Harish-Chandra by taking the trace:
\[
\hat{f}(\pi_v)={\rm tr}\, \pi_v(f).
\]
In particular, for $v=p$ finite and $f=\varepsilon_p^\circ$ the characteristic function of $\bK_p$, we have $\widehat{\varepsilon_p^\circ}(\pi_p)=1$  for every unramified representation $\pi_p$ of $G(\Q_p)$.

The Plancherel measure $\d m_v^{\mathrm{pl}}$ on the dual group $G(\Q_v)^\wedge$ is supported on the tempered unitary dual~$G(\Q_v)^{\wedge, \textrm{temp}}$ and verifies the inversion formula
\[
f(e)=\int_{G(\Q_v)^{\wedge, \textrm{temp}}}\hat{f}(\pi_v)\d m_v^{\mathrm{pl}}(\pi_v)
\]
for all $f\in C^\infty_c(G(\Q_v))$, where $e\in G$ is the identity. 
%In particular, one can give an expression for $f(g)$ for any $g\in G$ by replacing $f$ with its right-translate $R_gf$. 

\subsection{Archimedean spectrum and Plancherel measure}
We recall the explicit description of the tempered, generic, and unitary dual of $G(\R)^1$. For this, it will be useful to let~$\{\chi_+, \chi_-, \chi_1\}$ be the following characters of $T(\R)\cap G(\R)^1$:
\begin{enumerate}
\item[--] $\chi_+$ is the trivial character;
\item[--] $\chi_-$ is the character $\chi_- ({\rm diag}(a_1,a_2))= {\rm sgn}(a_1) {\rm sgn}(a_2)$;
\item[--] $\chi_1$ is the character $\chi_1 ({\rm diag}(a_1,a_2))= {\rm sgn}(a_1)$.
\end{enumerate}
For $\chi$ as above and $\nu\in\C$ we let $\pi_{\nu,\chi}$ denote the principal series representation unitarily induced from the character $\left(\begin{smallmatrix} a_1 & *\\ 0 & a_2\end{smallmatrix}\right)\mapsto \chi({\rm diag}(a_1,a_2))|a_1/a_2|^{i\nu}$ of the Borel subgroup~$B\cap~G(\R)^1$. In particular, the infinitesimal character of $\pi_{\nu,\chi}$ is $\frac14+\nu^2$. Then any $\pi\in G(\R)^{1\wedge}$ is isomorphic to one of the following types of representations:
\begin{enumerate}
\item the characters $\eta \circ \det$, where $\eta$ is either the trivial or sign character of $\{\pm 1\}$;
\item the discrete series representations, denoted $D_k$, for $k\geq 2$;
\item the weight zero even principal series representations $\pi_{\nu,\chi_+}$, where $\nu\in \R_{\geq 0}\cup i[0,1/2)$;
\item the weight zero odd principal series representations $\pi_{\nu,\chi_-}$, where $\nu\in \R_{\geq 0}\cup i(0,1/2)$;
\item the weight one principal series representations $\pi_{\nu,\chi_1}$, where $\nu\in \R_{>0}$;
\item the limit of discrete series representation, denoted $D_1$.
\end{enumerate}

In the above classification, the generic representations are the infinite dimensional ones; this eliminates the characters. Among the generic representations, the tempered ones are the discrete series representations and the various principal series representations with $\nu\in\R$. In particular, only the weight zero principal series representations can be non-tempered, namely, those with continuous parameter $\nu$ in the complementary segment $i(0,1/2)$.

Let $\mathcal{D}, \mathcal{P}_+, \mathcal{P}_-, \mathcal{P}_1^\circ$ denote the subsets of $\GL_2(\R)^{1\wedge, {\rm temp}}$ consisting of the discrete series representations, along with the tempered weight zero even, tempered weight zero odd, and weight 1 principal series representations of $G(\R)^1$, respectively.  Let $\mathcal{P}_1=\mathcal{P}_1^\circ\cup\{D_1\}$ denote the closure of the weight one principal series. We have
\[
\GL_2(\R)^{1\wedge, {\rm temp}}=\mathcal{D}\amalg \mathcal{P}_+\amalg \mathcal{P}_-\amalg \mathcal{P}_1.
\]

%\subsection{Spectral transforms on the unitary dual}
% For $\pi_v\in G(\Q_v)^\wedge$ and $f\in C^\infty_c(G(\Q_v))$ we let $\hat{f}(\pi_v)$ be the trace of the operator
%\[
%\pi_v(f)=\int_{G(\Q_v)}f(g)\pi_v(g)\d g.
%\]
%
%
%We shall write this more explicitly for $v=\infty$, under the additional constraint that $f$ verifies
%\[
%f(k_1gk_2)=\frac{{\rm tr}\, \tau(k_1 k_2)}{\dim\tau}  f(g), \;\textrm{ for }\; k_1,k_2\in \bK_\infty={\rm O}(2),
%\]
%where $\tau$ denotes one of three representations $\{1, \det, \tau_1\}$ of $\bK_\infty$, with $\tau_1={\rm Ind}_{\SO(2)}^{{\rm O}(2)}(e^{2\pi \theta})$. Note that these representations restrict to $\chi_{0,+},\chi_{0,-}, \chi_1$ on $T(\R)\cap\bK_\infty$, respectively.
%\[
%asdf
%\]

We now come to an explicit description of the Plancherel measure for $\GL_2(\R)^1$. This can be deduced\footnote{To cite a source such as \cite{lang_sl2R_1985} for the Plancherel measure of $\SL_2(\R)$, one must also compare normalizations of Haar measures. The restriction of the measure $\d g_\infty$ on $\GL_2(\R)^1$ defined in \S\ref{sec:Haar} to the open subgroup~$\SL_2(\R)$ yields a Haar measure which is $2$ times the Haar measure used in \textit{loc. cit}. To correct for this, one must multiply the Plancherel measure in \cite[p. 175]{lang_sl2R_1985} by a factor of $1/2$.}  from the Plancherel measure of the index two subgroup $\SL_2(\R)$, using the fact that the restriction of the principal series representations from $\GL_2(\R)^1$ to $\SL_2(\R)$ remain irreducible, whereas the restriction of $D_k$ to $\SL_2(\R)$ decomposes as a direct sum $D_k^+\oplus D_k^-$. One may calculate $\d m_\infty^{\mathrm{pl}}(\pi_\infty)$ as follows:
\begin{enumerate}
\item for the discrete series $D_k$ of weight $k\geq 2$, we have\footnote{For a direct computation of this, see \cite[Proposition 14.4]{knightly_traces_2006}, where the formal degree of $D_k$ is explicated with respect to the above choice of measures.}
\begin{equation}\label{eq:Dk-Plancherel}
\d m_\infty^{\mathrm{pl}}(D_k) = \frac{k-1}{4\pi};
\end{equation}
\item for the tempered weight zero even/odd principal series, we have
\[
\d m_\infty^{\mathrm{pl}}(\pi_{\nu,\chi_+})=\d m_\infty^{\mathrm{pl}}(\pi_{\nu,\chi_-})  = \frac{1}{2\pi} \nu \tanh(\pi\nu) \d \nu;
\]
\item for the (closure of the) weight one principal series, we have
\[
\d m_\infty^{\mathrm{pl}}(\pi_{\nu,\chi_1}) = \frac{1}{2\pi} \nu \coth(\pi\nu) \d \nu.
\]
\end{enumerate}
Therefore, the Plancherel inversion formula for $\GL_2(\R)^1$ reads
\[
f(e)=\frac{1}{2\pi}\int_0^\infty (h_++h_-)(\nu)\nu\tanh(\pi\nu) \d \nu+\frac{1}{2\pi}\int_0^\infty h_1(\nu)\nu\coth(\pi\nu) \d \nu+\frac{1}{4\pi}\sum_{k\geq 2}(k-1)h_k,
\]
where $h_k=\hat{f}(D_k)$ and $h_\bullet(\nu) = \hat{f}(\pi_{\nu, \chi_\bullet})$ for $\bullet \in \{+,-,1\}$.

\begin{remark}
In practice we shall treat the weight 0 even and odd principal series representations together. Letting $\mathcal{P}_0=\mathcal{P}_+\amalg \mathcal{P}_-$, we have the following decomposition
\begin{equation}\label{real-tempered-dual}
\GL_2(\R)^{1\wedge, {\rm temp}}=\mathcal{D}\amalg \mathcal{P}_0\amalg \mathcal{P}_1.
\end{equation}
Putting $h_0=h_++h_-$ and extending,  by parity, the integrals over the positive reals to all of~$\R$, we have
\[
f(e)=\frac{1}{4\pi}\int_\R h_0(\nu)\nu\tanh(\pi\nu) \d \nu+\frac{1}{4\pi}\int_\R h_1(\nu)\nu\coth(\pi\nu) \d \nu+\frac{1}{4\pi}\sum_{k\geq 2}(k-1)h_k.
\]
In particular, we may write
\begin{equation}\label{eq:P1-P2-Plancherel}
\begin{aligned}
\int_{\mathcal{P}_0}h(\pi_\infty)\d m^{\rm pl}_\infty(\pi_\infty)&=\frac{1}{4\pi}\int_\R h(\nu)\nu\tanh(\pi\nu) \d \nu,\\
\int_{\mathcal{P}_1}h(\pi_\infty)\d m^{\rm pl}_\infty(\pi_\infty)&=\frac{1}{4\pi}\int_\R h(\nu)\nu\coth(\pi\nu) \d \nu,
\end{aligned}
\end{equation}
\textit{for any function} $h$ for which the right-hand side converges.\end{remark}

\section{Conductors and the limiting measure $\hat{\mu}_\mathfrak{F}$}
\label{sec:conductor-limiting-measure}

In this section we discuss the local conductors and make sense of the measure $\hat{\mu}_\mathfrak{F}$ appearing in the statement of Theorem~\ref{thm}. 
We also introduce the subfamilies of $\mathfrak{F}$ corresponding to the different series of representations at the archimedean place, that will be addressed separately, thereby setting up the proof of Theorem \ref{thm}.

An automorphic representation $\pi \in \mathfrak{F}$ can be written as a restricted tensor product $\otimes_v \pi_v$ where $\pi_v \in G(\Q_v)^\wedge$, it will hence be sufficient to define the local analytic conductors $c(\pi_v)$ so that $c(\pi_v) = 1$ except for a finite number of places, and let $c(\pi) =\prod_v c(\pi_v)$.

\subsection{Local conductor at $p$}\label{sec:Casselman}
At a finite place $p$, the local conductor $c(\pi_p)$ of $\pi_p \in G(\Q_p)^{\wedge,{\rm gen}}$ is defined from the epsilon factor of its associated $L$-function. Casselman \cite{casselman_results_1973} proved that we have the equality $c(\pi_p) = p^{f({\pi_p})}$ where $f({\pi_p})$ is the smallest non-negative integer $f$ such that~$\pi_p$ admits nonzero fixed vectors by the congruence subgroup
\begin{equation}\label{eq:K1p-def}
K_{1,p}(f)=\left\{g=\begin{pmatrix} a & b\\ c & d\end{pmatrix}\in \bK_p: c\in p^f\Z_p,\; d-1\in p^f\Z_p\right\}.
\end{equation}
One deduces from this characterization, oldform dimension formulae, and the Plancherel inversion formula \cite[Section 6.1]{brumley_counting_2018} that
\begin{equation}\label{eq:local-meas-volume}
\int_{G(\Q_p)^{\wedge,\textrm{temp}}}\frac{\d m^{\rm pl}_p(\pi_p)}{c(\pi_p)^3}=\frac{\zeta_p(1)}{\zeta_p(3)^3},
\end{equation}
which will be of importance in \S\ref{sec:adelic-spectral-measure}. We note that the volume assigned to $K_{1,p}(f)$ by the measure $\d g_p$ in \S\ref{sec:Haar} is
\begin{equation}\label{eq:vol-K1(pf)}
{\rm vol}(K_{1,p}(f))=[\bK_p:K_{1,p}(f)]^{-1}=\frac{1}{\varphi_2(p^f)},
\end{equation}
where $\varphi_2=p_2 \star \mu$ and $p_2(d) = d^2$.

Note that $c(\pi_p) = 1$ for almost all places, so that the product over all the finite places of~$c(\pi_p)$ is well-defined. For later use, we introduce $K_1(q) = \prod_{p^f || q} K_{1,p}(f)$.

\subsection{Archimedean conductor}
\label{sec:notation-conductor}
The archimedean conductor was introduced in \cite{iwaniec_perspectives_2000} (with slightly different antecedents dating back at least to \cite{hoffstein_siegel_1995}). For example, when $\pi$ is the discrete series representation $D_k$, Iwaniec and Sarnak put $c_{\rm IS}(D_k)=k^2$. When $\pi_{\nu,\chi}$ is a unitary generic principal series representation, then they define the archimedean conductor~as
\begin{equation}\label{IS-cond}
c_{\textrm{IS}}(\pi_{\nu,\chi})=(1+|\nu|)^2.
\end{equation}
We shall adopt the same convention as Iwaniec--Sarnak for the discrete series representations, putting $c(D_k)=k^2$, but proceed slightly differently for the principal series representations. Indeed, we adopt the following axiomatic definition.

\begin{definition}\label{defn-c}
Let $\delta>0$. Let ${\rm Cond}_\delta(\C)$ denote the class of complex-valued functions $c$ on~$\C$ satisfying the following properties:
\begin{enumerate}
\item\label{cond1} even;
\item\label{cond2} $c(\nu)$ is holomorphic on the strip $|\Im(\nu) | \leqslant \tfrac12 + \delta$;
\item\label{cond3} $c(\nu) \asymp c_{\mathrm{IS}}(\pi_{\nu,\chi})$ for $\nu\in\R$;
\item\label{cond4} $c(\nu)$ is non-zero on the strip $|\Im(\nu) | \leqslant \tfrac12+\delta$;
\item\label{cond5} $c(\nu)$ is real and non-negative on $\R\cup i[\tfrac12, \tfrac12 ]$.
\end{enumerate}
Fix $c\in {\rm Cond}_\delta(\C)$. Let $\pi_{\nu,\chi}\in G(\R)^{1\wedge}$ be a unitary generic principal series representation. We call the complex number $c(\nu)$ the \emph{conductor} of $\pi_{\nu,\chi}$. We shall sometimes write it~$c(\pi_{\nu,\chi})$.
\end{definition}

Henceforth we shall fix a choice of $c\in {\rm Cond}_\delta(\C)$. An example of an eligible function is given by $c(\nu)=1+\nu^2\in {\rm Cond}_\delta(\C)$, for any $0<\delta<1/2$, which is a slight variant of \eqref{IS-cond}.

\begin{remark}\label{rem:admissibility}
Conditions \eqref{cond1}-\eqref{cond3} assure that, for ${\rm Re}(s) > 1+\delta$, the function $h_s(\nu)=c(\nu)^{-s}$ is admissible in the Selberg trace formula (we recall these conditions in Definition \ref{defn-admissible}). Note that such a function is not in the Paley--Wiener class since, by \eqref{cond3}, it is not of rapid decay in horizonal strips.

While Condition \eqref{cond3} ensures the asymptotic compatibility with the definition of Iwaniec--Sarnak, the analyticity requirement \eqref{cond2} does not in fact apply to their definition.
\end{remark}

\begin{remark}
Condition \eqref{cond2} just barely fails for the axiomatic conductor of \cite{booker_l-functions_2015} and the log-conductor of \cite{conrey_integral_2005}, which are holomorphic in any strip of the form $|\Im(\nu) | \leqslant \tfrac12 - \varepsilon$.
\end{remark}

\begin{remark}\label{rem:arch-int}
We have been unable to interpret the archimedean integral
\begin{equation}\label{eq:arch-int}
\int_{G(\R)^{1\wedge, \mathrm{temp}}} \frac{\d m^{\rm pl}_\infty(\pi_\infty)}{c(\pi_\infty)^{3}}
\end{equation}
appearing in the volume constant  \eqref{MT-meas-vol} of the main term, as we do for the analogous integral at the finite places in \eqref{eq:local-meas-volume}. In the latter case, the numerical identity \eqref{eq:local-meas-volume} results from the evaluation at $s=3$ of the local conductor zeta function
\[
\int_{G(\Q_p)^{\wedge,\textrm{temp}}}\frac{\d m^{\rm pl}_p(\pi_p)}{c(\pi_p)^{s}},
\]
which we succeed in identifying, as a function of $s$, with the zeta quotient $\zeta_p(s-2)/\zeta_p(s)$ in the half-plane ${\rm Re}(s)>2$; see \cite[Section 6.1]{brumley_counting_2018}. A similar interpretation of \eqref{eq:arch-int} would be sensitive to the exact definition one uses for $c(\pi_\infty)$. One natural candidate would be to replace $c(\pi_\infty)^{-s}$ with the gamma factor $\gamma(s+1/2,\pi_\infty)$, in view of $c(\pi_\infty\otimes |\det|^{-it})^{-\sigma}\asymp~\gamma(1/2+s,\pi_\infty)$, where we write $s=\sigma+it$. Thus one would consider
\[
\int_{G(\R)^{1\wedge, \mathrm{temp}}}\gamma(1/2+s,\pi_\infty)\d m^{\rm pl}_\infty(\pi_\infty);
\]
it seems plausible that this integral can be evaluated explicitly as a function of $s$. The use of $\gamma(1/2+s,\pi_\infty)$ would have the advantage of yielding a canonically defined integral, but~$\gamma(1/2,\pi_\infty)$ does not (quite) satisfy condition \eqref{cond2} of Definition \ref{defn-c}; see \cite[Prop. 9.5]{C-PS}.
\end{remark}

\subsection{The measure $\hat\mu_\mathfrak{F}$}
\label{sec:adelic-spectral-measure}

We now let $\Pi(G(\A)^1)^{\rm temp}$, $\Pi(G(\A)^1)^{\rm gen}$, and $\Pi(G(\A)^1)$ denote the direct product of the local tempered, generic, and  unitary duals, respectively, endowed with the product topology.\footnote{We emphasize that a $\pi\in\Pi(G(\A)^1)$ is not necessarily admissible.} 
We define $m^{\rm pl}_\A=\prod_v m^{\rm pl}_v$ to be the product of the Plancherel measures on $\Pi(G(\A)^1)^{\rm temp}=\prod_p G(\Q_p)^{\wedge, {\rm temp}}\times G(\R)^{1\wedge, {\rm temp}}$. Note that $m^{\rm pl}_\A$ (indeed each $m^{\rm pl}_v$) is an infinite measure. 

We define a positive \textit{finite} measure $\hat\mu_\mathfrak{F}$ on $\Pi(G(\A)^1)^{\rm temp}$ as follows. Let $\Omega=\prod_v\Omega_v$ be a basic open set, where for $v<\infty$ (resp. $v=\infty$) $\Omega_v$ is open in $G(\Q_v)^{\wedge,\rm{temp}}$ (resp. $G(\R)^{1 \wedge, \rm {temp}}$) and $\Omega_p=G(\Q_p)^{\wedge,\rm{temp}}$ for almost every $p$. Define
\[
\hat\mu_\mathfrak{F}(\Omega)=\int_\Omega^\star  \frac{\d m^{\rm pl}_\A(\pi)}{c(\pi)^3},
\]
where the regularized integral is defined by
\[
\int_{\Omega_\infty}\frac{\d m^{\rm pl}_\infty(\pi_\infty)}{c(\pi_\infty)^3} \prod_p \zeta_p(1)^{-1} \int_{\Omega_p}\frac{\d m^{\rm pl}_p(\pi_p)}{c(\pi_p)^3}.
\]
In particular, taking $\Omega=G(\A)^1$ and using \eqref{eq:local-meas-volume}, the total volume of $\hat\mu_\mathfrak{F}$ can be evaluated as
\begin{equation}\label{MT-meas-vol}
{\rm vol}(\hat\mu_\mathfrak{F}) = \int_{\Pi(G(\mathbb{A})^1)^{\rm temp}}^\star \frac{\d m^{\rm pl}_\A(\pi)}{c(\pi)^3}=\frac{1}{\zeta(3)^3}\int_{G(\R)^{1\wedge}}\frac{\d m^{\rm pl}_\infty(\pi_\infty)}{c(\pi_\infty)^3}.
\end{equation}

\subsection{Subfamilies and their counting functions}
For $\bullet\in\{\mathcal{D},\mathcal{P}_0,\mathcal{P}_1\}$, we let
\[
\Omega_\bullet=\Pi(G(\A_{f}))^{\rm temp}\times \bullet\subset \Pi(G(\A)^1)^{\rm temp}.
\]
Corresponding to \eqref{real-tempered-dual} we have $\Pi(G(\A)^1)^{\rm temp}=\Omega_{\mathcal{D}}\amalg\Omega_{\mathcal{P}_0}\amalg\Omega_{{\mathcal{P}_1}}$. Inserting \eqref{MT-meas-vol} we find
\begin{align*}
{\rm vol}(\hat\mu_\mathfrak{F})&={\rm vol}_{\hat\mu_\mathfrak{F}}(\Omega_\mathcal{D})+{\rm vol}_{\hat\mu_\mathfrak{F}}(\Omega_{\mathcal{P}_0})+{\rm vol}_{\hat\mu_\mathfrak{F}}(\Omega_{\mathcal{P}_1})\\
&=\frac{1}{\zeta(3)^3}\left(\int_\mathcal{D}\frac{\d m^{\rm pl}_\infty(\pi_\infty)}{c(\pi_\infty)^3}+\int_{\mathcal{P}_0}\frac{\d m^{\rm pl}_\infty(\pi_\infty)}{c(\pi_\infty)^3}+\int_{\mathcal{P}_1}\frac{\d m^{\rm pl}_\infty(\pi_\infty)}{c(\pi_\infty)^3}\right).
\end{align*}
Similarly, we let $\mathfrak{F}_\bullet$ denote those $\pi\in\mathfrak{F}$ for which $\pi_\infty\in\bullet$. Let
\[
N_\bullet(X)=|\{\pi\in\mathfrak{F}_\bullet: c(\pi)\leq X\}|,
\]
so that $N_\mathfrak{F}(X)=N_\mathcal{D}(X)+N_{\mathcal{P}_0}(X)+N_{\mathcal{P}_1}(X)$. Note that $N_{\mathcal{P}_0}(X)$ counts the weight zero even and odd Maass forms simultaneously. We have combined their contributions since the classical trace formulae we use do not distinguish them.

We shall prove Theorem \ref{thm} by showing that 
\begin{align*}
N_\mathcal{D}(X)&=\frac{1}{3} \xi(2) {\rm vol}_{\hat\mu_\mathfrak{F}}(\Omega_\mathcal{D}) X^3 + O(X^2),\\
N_{\mathcal{P}_k}(X)&=\frac{1}{3} \xi(2) {\rm vol}_{\hat\mu_\mathfrak{F}}(\Omega_{\mathcal{P}_k}) X^3 + O_\varepsilon(X^{3-\frac23 + \varepsilon})\qquad (k=0,1).
\end{align*}
These will be established in Propositions \ref{prop:counting-law-discrete} and \ref{prop:counting-law-Maass} below.

\subsection{From adelic to classical}
For the convenience of the reader, we provide a dictionary from the adelic setting of this section (as well as the introduction, where our main theorem was stated) to the classical setting (in which we prove the theorem). This will in particular be of use when we quote the Selberg trace formula in Appendix \ref{sec:STF}.

As usual, we let $[\GL_2]=\GL_2(\Q)\backslash\GL_2(\A)^1$ denote the automorphic space for $\GL_2$ over $\Q$. Recall the open compact subgroup $K_1(q)\subset\GL_2(\A_f)$ defined in \S\ref{sec:Casselman}. We are interested in the quotient
\[
X_1(q)=[\GL_2]/K_1(q)=\GL_2(\Q)\backslash\GL_2(\A)^1/K_1(q).
\]
Indeed, the cuspidal automorphic spectrum of conductor dividing $q$ (and with normalized central character) is naturally defined as functions on this space. 

%In particular, from \eqref{GL2-global-volume} and \eqref{eq:vol-K1(pf)} we deduce that
%\begin{equation}\label{X1(q)-volume}
%{\rm vol}(X_1(q))={\rm vol}[\GL_2]/{\rm vol}(K_1(q))=\varphi_2(q)\xi(2).
%\end{equation}

We now write the above double quotient more classically. We use the more suggestive notation $\SL_2^\pm(\R)$, mentioned in \S\ref{sec:gp-decomp}, in place of $\GL_2(\R)^1$. From strong approximation, and the fact that $\GL_2(\Q)$ contains elements of negative determinant, we have
\[
\GL_2(\A)^1=\GL_2(\Q)\SL_2^\pm(\R)K_1(q)=\GL_2(\Q)\SL_2(\R)K_1(q).
\]
From this it follows that we may write $X_1(q)$ classically as $\Gamma_1(q)\backslash \SL_2(\R)$, where we have put
\[
\Gamma_1(q)=K_1(q)\cap\GL_2^+(\Q)=\left\{g=\begin{pmatrix} a & b\\ c & d\end{pmatrix}\in\SL_2(\Z): c\equiv d-1\equiv 0\mod q\right\}.
\]

We now descend further from $X_1(q)$ to the underlying modular curve (or surface) $Y_1(q)$, by quotienting out by the maximal connected compact subgroup of $\GL_2(\R)^1$. Recall from~\S\ref{sec:gp-decomp} the notation $\mathbf{K}_\infty^+=\SO_2(\R)$. We identify $\SL_2(\R)/\SO_2(\R)$ with the upper half-plane $\mathbb{H}$. Then the cuspidal automorphic spectrum can be naturally viewed as sections of line bundles on the hyperbolic surface
\[
Y_1(q)=X_1(q)/\mathbf{K}_\infty^+=\Gamma_1(q)\backslash\mathbb{H}=\overline{\Gamma_1(q)}\backslash\mathbb{H},
\]
where $\overline{\Gamma_1(q)}$ is the image of $\Gamma_1(q)$ in $\PSL_2(\Z)$. This is reviewed in the special case of weight one Maass forms in Appendix \ref{sec:STF}.

%From \eqref{X1(q)-volume} we deduce that
%\begin{equation}\label{Y1(q)-volume}
%{\rm vol}(Y_1(q))\overset{???}{=}\varphi_2(q)\xi(2).
%\end{equation}
%\farrell{missing a factor of 2} 

\section{Holomorphic forms contribution}
\label{sec:holomorphic-forms}

We prove in this section the following result:
\begin{prop}
\label{prop:counting-law-discrete}
We have, as $X$ grows to infinity,
\[
N_\mathcal{D}(X) = \frac{1}{3} \xi(2){\rm vol}_{\hat\mu_\mathfrak{F}}(\Omega_\mathcal{D}) X^3 + O(X^{2}).
\]
\end{prop}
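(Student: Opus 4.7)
Since $c(D_k) = k^2$, the condition $c(\pi) \leq X$ for $\pi \in \mathfrak{F}_\mathcal{D}$ becomes $k \leq \sqrt{X}$ and $c(\pi_f) \leq X/k^2$. Accordingly,
\[
N_\mathcal{D}(X) = \sum_{2 \leq k \leq \sqrt{X}} A_k\bigl(X/k^2\bigr), \qquad A_k(Y) = \#\{\pi \in \mathfrak{F}_\mathcal{D}: \pi_\infty = D_k,\ c(\pi_f) \leq Y\}.
\]
The plan is to reduce $A_k$ to classical dimensions of spaces of holomorphic cusp forms, to which the Eichler--Selberg dimension formula applies directly. In contrast to the Maass case treated in \S\ref{sec:maass-forms}, no Selberg trace formula with non-standard archimedean test function is needed, since the isolation of the discrete series in the unitary dual of $\GL_2(\R)^1$ makes $D_k$ amenable to a direct dimension count.

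By Casselman's characterization of the conductor (\S\ref{sec:Casselman}), $\dim \pi_f^{K_1(q)} = \tau(q/c(\pi_f))$ whenever $c(\pi_f) \mid q$. Writing $a_k(d)$ for the number of $\pi \in \mathfrak{F}$ with $\pi_\infty = D_k$ and $c(\pi_f) = d$, this yields
\[
\dim S_k(\Gamma_1(q)) = \sum_{d \mid q} \tau(q/d) \, a_k(d),
\]
and Möbius inversion (with $\tau^{-1} = \mu \star \mu$ under Dirichlet convolution) recovers $a_k$. The Eichler--Selberg dimension formula provides, for $k \geq 2$,
\[
\dim S_k(\Gamma_1(q)) = \frac{k-1}{24} \varphi_2(q) + E_k(q),
\]
where the leading term reflects the hyperbolic volume of $\Gamma_1(q) \backslash \mathbb{H}$ and $E_k(q)$ collects the cuspidal and (bounded) elliptic contributions. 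Using the identities $\sum_q \varphi_2(q) q^{-s} = \zeta(s-2)/\zeta(s)$ and $\sum_q (\mu \star \mu)(q) q^{-s} = \zeta(s)^{-2}$, a standard partial summation argument gives
\[
\sum_{q \leq Y} \bigl((\mu \star \mu) \star \varphi_2\bigr)(q) = \frac{Y^3}{3 \zeta(3)^3} + O(Y^2).
\]

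Combining these ingredients and summing over $k$, the main term becomes
\[
\frac{X^3}{72 \zeta(3)^3} \sum_{k \geq 2} \frac{k-1}{k^6} = \frac{\zeta(5) - \zeta(6)}{72 \zeta(3)^3} X^3,
\]
which coincides with $\tfrac{1}{3} \xi(2) \, {\rm vol}_{\hat\mu_\mathfrak{F}}(\Omega_\mathcal{D}) X^3$ upon inserting the Plancherel weight \eqref{eq:Dk-Plancherel}. The chief technical obstacle lies in securing the unconditional error $O(X^2)$: the Perron-type error for the main sum contributes $\sum_{k \leq \sqrt{X}} (X/k^2)^2 \ll X^2$, while the Möbius-convolved contribution of $E_k(q)$ must be controlled uniformly in both $k$ and $q$. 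For $\Gamma_1(q)$ with $q \geq 4$ the elliptic contribution vanishes, and the cusp term is governed by $\tfrac{1}{2}\sum_{d \mid q}\varphi(d)\varphi(q/d)$, whose Dirichlet generating series $\zeta(s-1)^2/\zeta(s)^2$ has rightmost pole at $s = 2$; this ensures that the total cuspidal contribution fits within $O(X^2)$ after Möbius inversion and summation over $k \leq \sqrt{X}$.
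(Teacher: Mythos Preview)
Your proof is correct and follows essentially the same approach as the paper: both reduce $N_\mathcal{D}(X)$ to a sum of newform dimensions $\dim S_{\rm new}(k,N)$ over $k^2N\leq X$, extract the main term $\frac{(k-1)}{24}((\mu\star\mu)\star\varphi_2)(N)$ from the volume contribution, and identify the resulting constant $\frac{1}{72\zeta(3)^3}\sum_{k\geq 2}(k-1)k^{-6}$ with $\tfrac13\xi(2)\,{\rm vol}_{\hat\mu_\mathfrak{F}}(\Omega_\mathcal{D})$. The only cosmetic difference is that the paper quotes Martin's formula for $\dim S_{\rm new}(k,N)$ and his Proposition~31 for the partial sum directly, whereas you reconstruct these via M\"obius inversion from $\dim S_k(\Gamma_1(q))$; note that your Perron-type error should carry the factor $(k-1)$ (harmless, since $\sum_k (k-1)k^{-4}<\infty$), and the double pole of $\zeta(s-1)^2/\zeta(s)^4$ at $s=2$ would na\"ively produce an extra $\log X$ in the cusp contribution, so the clean $O(X^2)$ really rests on the finer cancellation encoded in Martin's exact formulae.
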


\begin{proof}
Let $S_{\rm new}(k,N)$ be the set of holomorphic cuspidal newforms of weight $k$ and level~$\Gamma_1(N)$. The conductor of a newform $f \in S_{\rm new}(k,N)$ is $k^2N$. Then
\[
N_\mathcal{D}(X)=\sum_{k^2 N \leqslant X} \dim S_{\rm new} (k,N).
\]
To express the asymptotic dimension of $S_{\rm new} (k,N)$ we introduce the following arithmetical function. We let $s(N)$ be the multiplicative function satisfying
\[
s(p)=1-\frac{3}{p^2},\quad s(p^2)=1-\frac{3}{p^2}+\frac{3}{p^4},\quad s(p^\alpha)=\left(1-\frac{1}{p^2}\right)^3 \;\textrm{ for } \alpha\geq 3.
\]
Then one may deduce that
\[
\dim S_{\rm new} (k,N)= \frac{(k-1)N^2}{24}s(N)+O(N + N^\varepsilon k).
\]
An immediate calculation shows that $\mu \star s(N) \ll N^{-2}$. By \cite[Proposition 31]{martin_dimensions_2005}, we get
\[
\sum_{N\leqslant X} N^2 s(N) = c\frac{X^{3}}{3} + O(X^2),
\]
where, using the explicit description of $s(N)$,
\[
c = \prod_p \left( 1 - \frac{1}{p} \right) \left( 1 + \frac{s(p)}{p} + \frac{s(p^2)}{p^2} + \cdots  \right)=\prod_p \left( 1 - \frac{3}{p^3} + \frac{3}{p^6} - \frac{1}{p^9} \right) = \frac{1}{\zeta(3)^3}.
\]
Thus, 
\[
N_{\mathcal{D}}(X)= \sum_{k^2\leq X}\frac{k-1}{24}\sum_{N\leq X/k^2} N^2s(N) + O(X^2) = \frac13\frac{1}{\zeta(3)^3} X^3\sum_{k\geq 2}\frac{(k-1)/24}{k^6} + O(X^2).
\]

We have only to massage the main term into the desired form. Using the determination of $m^{\rm pl}_\infty(D_k)$ in \eqref{eq:Dk-Plancherel} as well as $\xi(2)=\zeta(2)\Gamma_\mathbb{R}(2)=\pi/6$, we have
\[
\frac{k-1}{24}=\frac{\pi}{6}\frac{k-1}{4\pi}=\xi(2)m^{\rm pl}_\infty(D_k).
\]
Inserting this into the above asymptotic gives
\[
\sum_{k^2N \leqslant X} \dim S_{\rm new} (k,N) + O(X^2) = \frac13 \xi(2) \frac{1}{\zeta(3)^3} \sum_{k\geq 2} \frac{m^{\rm pl}_\infty(D_k)}{c(D_k)^3} X^3+O(X^2).
\]
We recognize the leading term
\[
\frac{1}{\zeta(3)^3} \sum_{k\geq 2} \frac{m^{\rm pl}_\infty(D_k)}{c(D_k)^3}=\frac{1}{\zeta(3)^3}\int_\mathcal{D}\frac{\d m^{\rm pl}_\infty(\pi_\infty)}{c(\pi_\infty)^3}={\rm vol}_{\hat\mu_\mathfrak{F}}(\Omega_\mathcal{D}), 
\]
finishing the proof.
\end{proof}

\begin{remark}
The computations in the proof of Proposition \ref{prop:counting-law-discrete} do not critically depend on the exact value of the archimedean conductor $c(D_k)$ of the discrete series, but hold similarly with any value of $c(D_k)$, provided the final sum converges.
\end{remark}

\section{Maass forms contribution}
\label{sec:maass-forms}

We prove in this section the following result, which provides for the asymptotic count of the weight zero and weight one Maass forms. Throughout this section we shall let $k=0,1$.
\begin{prop}
\label{prop:counting-law-Maass}
We have, as $X$ grows to infinity and for all $\varepsilon >0$,
\[
N_{\mathcal{P}_k}(X) =  \frac{1}{3} \xi(2) {\rm vol}_{\hat\mu_{\mathfrak{F}}}(\Omega_{\mathcal{P}_k}) X^3 + O_\varepsilon(X^{3-\frac23 + \varepsilon} ).
\]
\end{prop}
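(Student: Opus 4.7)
The plan is to study the conductor zeta function
\begin{equation*}
Z_k(s) = \sum_{\pi \in \mathfrak{F}_{\mathcal{P}_k}} c(\pi)^{-s}
\end{equation*}
and apply the Tauberian theorem of Appendix~\ref{sec:tauberian}. By Casselman's oldform theory (\S\ref{sec:Casselman}), the dimension of $K_1(d)$-fixed vectors in $\pi_f$ equals $d_2(d/c(\pi_f))$ when $c(\pi_f) \mid d$ and vanishes otherwise. Opening this convolution by Dirichlet inversion yields
\begin{equation*}
Z_k(s) = \frac{1}{\zeta(s)^2} \sum_{d \geq 1} \frac{T_s(d)}{d^s}, \qquad T_s(d) = \sum_{\substack{\pi \in \mathfrak{F}_{\mathcal{P}_k} \\ c(\pi_f)\mid d}} \dim(\pi_f^{K_1(d)})\, h_s(\nu_{\pi_\infty}),
\end{equation*}
with $h_s(\nu) = c(\nu)^{-s}$; this test function is admissible in the Selberg trace formula for $\Re(s) > 1 + \delta$ thanks to Remark~\ref{rem:admissibility}. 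The quantity $T_s(d)$ is precisely the cuspidal spectral side of the Selberg trace formula on $Y_1(d)$ (Appendix~\ref{sec:STF}) evaluated at $h_s$, once the Eisenstein contribution has been removed.

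The geometric side of the Selberg trace formula expands $T_s(d)$ as a sum of identity, elliptic, hyperbolic, and parabolic contributions. The identity contribution equals a constant multiple of ${\rm vol}(Y_1(d))$ times $\int_\R h_s(\nu)\,\nu\tanh(\pi\nu)\,\d\nu$ for $k=0$, or $\nu\coth(\pi\nu)\,\d\nu$ for $k=1$; the index $[\SL_2(\Z):\Gamma_1(d)] = \varphi_2(d)$ gives $\sum_d \varphi_2(d)/d^s = \zeta(s-2)/\zeta(s)$, so that the identity part of $Z_k(s)$ takes the form
\begin{equation*}
\xi(2) \frac{\zeta(s-2)}{\zeta(s)^3} \int_{\mathcal{P}_k} c(\pi_\infty)^{-s}\,\d m^{\rm pl}_\infty(\pi_\infty),
\end{equation*}
meromorphic in $\Re(s) > 2$ with a unique simple pole at $s = 3$ of residue $\xi(2)\,{\rm vol}_{\hat\mu_\mathfrak{F}}(\Omega_{\mathcal{P}_k})$. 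This is the main term; combined with the $X^3/3$ factor produced by the Tauberian theorem, it yields exactly the claimed asymptotic coefficient.

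The crux is to show that the elliptic, hyperbolic, and parabolic contributions extend holomorphically to a strip $\Re(s) > 7/3$ with polynomial growth in vertical strips. The key device is the contour shift $\nu \mapsto \nu - ic$ for $|c| < 1/2$, available thanks to the analyticity property~\eqref{cond2} of Definition~\ref{defn-c}, which converts the Harish--Chandra/Selberg transforms of $h_s$ appearing in the geometric sums into quantities with exponential decay in the archimedean parameter of the conjugacy class, in analogy with Lemma~\ref{lem:BoundOnFourier}. The elliptic terms (parametrized by class numbers of imaginary quadratic orders) and the parabolic terms (essentially integrals of $h_s$ against standard factors) should then extend well beyond the desired strip by routine estimates. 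The main obstacle, as signalled in the introduction, is the hyperbolic contribution. A term-by-term bound at fixed level $d$ is insufficient; instead one must exchange the summations and exploit the sum over $d$, pairing the divisor-type coefficients generated by $\mu \star \mu$ with the standard parametrization of hyperbolic $\Gamma_1(d)$-classes by real quadratic units, in direct analogy with the treatment of the $u \neq 1$ terms in Section~\ref{sec:GL1-case}. This step should yield absolute convergence and a bound polynomial in $|s|$ in the half-plane $\Re(s) > 7/3$.

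Once these analytic properties of $Z_k(s)$ are established---holomorphy in $\Re(s) > 3$, meromorphic continuation to $\Re(s) > 7/3$ with only the simple pole at $s = 3$ described above, and polynomial vertical growth---the sharp Tauberian theorem of Appendix~\ref{sec:tauberian} delivers
\begin{equation*}
N_{\mathcal{P}_k}(X) = \tfrac{1}{3}\xi(2)\,{\rm vol}_{\hat\mu_\mathfrak{F}}(\Omega_{\mathcal{P}_k})\, X^3 + O_\varepsilon\bigl(X^{3 - 2/3 + \varepsilon}\bigr),
\end{equation*}
the exponent $2/3$ arising precisely from the analytic abscissa $7/3$ that the hyperbolic estimation is expected to reach. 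Quantifying the level cancellation in the hyperbolic sum is therefore both the main technical obstacle and the step that fixes the numerical value of the power savings.
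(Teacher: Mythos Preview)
Your overall architecture matches the paper's exactly: conductor zeta function, sieving to oldform-weighted sums, Selberg trace formula level by level, identity term giving the pole at $s=3$, and the Tauberian theorem of Appendix~\ref{sec:tauberian} to finish. The identity computation and residue identification are also correct.

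However, your explanation of where the exponent $2/3$ comes from is wrong, and this matters because it reflects a gap in how you plan to apply Theorem~\ref{thm:Tauberian}. That theorem takes as input \emph{both} an abscissa $\alpha$ and a vertical growth exponent $\kappa$, and produces an error $X^{\beta-(\beta-\alpha)/(\kappa+1)}$. You claim the continuation stops at $\Re(s)>7/3$ and that this abscissa alone produces $2/3$; but with unspecified ``polynomial vertical growth'' you would only get $X^{3-(2/3)/(\kappa+1)}$, strictly worse unless $\kappa=0$. In the paper, the continuation of $Z^k(s)$ actually goes all the way to $\Re(s)>2$: the hyperbolic, elliptic, parabolic, and Eisenstein contributions are all shown to be holomorphic and \emph{bounded} on vertical lines there. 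The only unbounded piece is the identity term itself, through the factor $\zeta(s-2)$, and its convexity bound gives $\kappa=1/2$ on $\Re(s)=2+\delta$. Thus the $2/3$ arises as $(3-2)/(1/2+1)$, not from an abscissa $7/3$. Your hyperbolic analysis therefore needs to aim for $\Re(s)>2$ with a bounded (not merely polynomial) contribution, and you must separately track the vertical growth of the identity term.

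Two further omissions: you acknowledge removing the Eisenstein contribution from the spectral side but never estimate it; in the paper this is a genuine step (\S\ref{sec:Z-Eisenstein}), handled via Huxley's explicit scattering determinant, and it too only extends to $\Re(s)>2$. Also, your description of the hyperbolic bound (``pairing divisor-type coefficients with real quadratic units'') is not quite what the paper does: the actual argument bounds the multiplicity $c_\gamma(d)$ of a fixed $\SL_2(\Z)$-hyperbolic class in $\Gamma_1(d)$ trivially by the index, uses the constraint $d\mid\det(\gamma-I)$ to control the $d$-sum by a divisor function, and closes with the prime geodesic theorem on $\SL_2(\Z)\backslash\mathbb{H}$. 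The contour shift you propose is used, but only to obtain the decay $g(x)\ll e^{-(1-\varepsilon)x}$ of the Fourier transform of $h_s$.
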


Our approach is through a Tauberian argument applied to the conductor zeta function, as in Section \ref{sec:GL1-case}. The latter is defined, for $\Re(s)$ large enough, by
\[
Z^k(s)  = \sum_{\pi \in \mathfrak{F}_{\mathcal{P}_k}} c(\pi)^{-s}.
\]
Its analytic properties are described in the following proposition. We will thoroughly use the notation $\sigma = \Re(s)$.

\begin{prop}
\label{prop:analytic-properties-Maass}
The function $Z^k(s)$ satisfies the following properties:
\begin{enumerate}
\item it is holomorphic on $\sigma>3$;
\item it admits a meromorphic continuation to $\sigma>2$;
\item it has a unique pole in the right-half plane $\Re (s)>2$ located at $s=3$, which is simple and has residue $\xi(2) {\rm vol}_{\hat\mu_\mathfrak{F}}(\Omega_{\mathcal{P}_k})$;
\item it is of moderate growth in vertical strips. More precisely, for any $0<\delta<1$, we have the bound $Z^k(2+\delta+it) \ll (1+|t|)^{\frac{1}{2}}$.
\end{enumerate}
\end{prop}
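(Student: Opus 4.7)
The strategy parallels the $\GL_1$ argument of Section \ref{sec:GL1-case}, with Poisson summation replaced by the Selberg trace formula at each level. First, Casselman's theorem combined with the local oldform dimension formula $\dim \pi_p^{K_{1,p}(f)} = \max(0,\, f - \ord_p c(\pi_f) + 1)$ yields the identity $\sum_{d:\, c(\pi_f) \mid d} d^{-s} \dim \pi_f^{K_1(d)} = c(\pi_f)^{-s} \zeta(s)^2$ for each $\pi \in \mathfrak{F}_{\mathcal{P}_k}$, and hence the Dirichlet series expression
\[
Z^k(s) = \zeta(s)^{-2} \sum_{d \geqslant 1} d^{-s}\, \tilde{N}^k(d; s), \qquad \tilde{N}^k(d; s) := \sum_{\pi \in \mathfrak{F}_{\mathcal{P}_k}} \dim \pi_f^{K_1(d)}\, c(\pi_\infty)^{-s}.
\]
Next, with the admissible test function $h_s(\nu) = c(\nu)^{-s}$ (valid for $\sigma > 1+\delta$ by Remark \ref{rem:admissibility}), the Selberg trace formula on $Y_1(d) = \Gamma_1(d) \backslash \mathbb{H}$, or its weight-one variant for $k=1$ (Appendix \ref{sec:STF}), decomposes
\[
\tilde{N}^k(d; s) = I_d(s) + E_d(s) + H_d(s) + U_d(s) - \mathrm{Eis}_d(s),
\]
with identity, elliptic, hyperbolic, unipotent, and (subtracted) Eisenstein contributions.

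The main term arises from the identity piece. Using $\mathrm{vol}(Y_1(d)) = \varphi_2(d) \cdot \pi/3$ and the Plancherel normalization \eqref{eq:P1-P2-Plancherel} (the doubling in $\mathcal{P}_0 = \mathcal{P}_+ \sqcup \mathcal{P}_-$ being absorbed since $c$ depends only on $\nu$), one obtains
\[
I_d(s) = \xi(2)\, \varphi_2(d)\, \int_{\mathcal{P}_k} c(\pi_\infty)^{-s}\, \d m^{\rm pl}_\infty(\pi_\infty),
\]
and summing over $d$ via $\sum_{d \geqslant 1} \varphi_2(d) d^{-s} = \zeta(s-2)/\zeta(s)$ gives an identity contribution to $Z^k(s)$ equal to
\[
\frac{\xi(2)\, \zeta(s-2)}{\zeta(s)^3}\, \int_{\mathcal{P}_k} \frac{\d m^{\rm pl}_\infty(\pi_\infty)}{c(\pi_\infty)^s},
\]
meromorphic on $\sigma > 0$ (the archimedean integral being holomorphic there) with a unique simple pole at $s = 3$ of residue $\xi(2) \zeta(3)^{-3} \int_{\mathcal{P}_k} c(\pi_\infty)^{-3}\, \d m^{\rm pl}_\infty = \xi(2)\, \mathrm{vol}_{\hat\mu_\mathfrak{F}}(\Omega_{\mathcal{P}_k})$, as claimed.

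The error terms must be shown holomorphic and of moderate growth on $\sigma > 2$. The elliptic contribution is a finite sum. The unipotent and Eisenstein terms factor over $d$ into ratios of zeta functions and can be controlled analogously to the $\GL_1$ argument. The hyperbolic contribution is the principal obstacle, and its treatment is the content of the section on the hyperbolic contribution promised in the introduction. It rests on a $\GL_2$-analogue of Lemma \ref{lem:BoundOnFourier}: the holomorphy of $c(\nu)^{-s}$ in the strip $|\mathrm{Im}\,\nu| \leqslant \tfrac12 + \delta$, from Condition \eqref{cond2} of Definition \ref{defn-c}, permits a contour shift in the Fourier inversion defining the geometric-side transform $g_s$ of $h_s$, giving exponential decay in the hyperbolic length variable. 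The essential extra saving, flagged in the introduction, comes from exchanging the order of summation between hyperbolic conjugacy classes in $\Gamma_1(d)$ and the sum over levels $d$, reorganizing the total hyperbolic sum as a Dirichlet series over discriminants of hyperbolic matrices, which converges for $\sigma > 2$ via standard bounds on class numbers and regulators of real quadratic orders. \emph{This is the main technical difficulty.}

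Finally, the vertical-strip bound $Z^k(2+\delta+it) \ll (1+|t|)^{1/2}$ is dictated by the main term. On $\Re(s) = 2+\delta$, convexity gives $|\zeta(s-2)| = |\zeta(\delta+it)| \ll (1+|t|)^{(1-\delta)/2 + \varepsilon}$, while $|\zeta(s)|^{-3}$ and the archimedean integral are bounded uniformly in $t$; choosing $\varepsilon < \delta/2$ produces the exponent $\tfrac12$. The geometric error contributions are of smaller order in $|t|$.
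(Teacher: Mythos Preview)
Your proposal is correct and follows essentially the same route as the paper: the same sieving identity (your factor $\zeta(s)^{-2}$ is exactly the paper's convolution with $\lambda=\mu\star\mu$), the same application of the Selberg trace formula with $h_s(\nu)=c(\nu)^{-s}$, the same identity-term computation yielding $\xi(2)\zeta(s-2)\zeta(s)^{-3}\int_{\mathcal P_k}c(\pi_\infty)^{-s}\,\d m_\infty^{\rm pl}$, and the same key idea for the hyperbolic term (contour shift for exponential decay of $g_s$, then interchange of the sums over levels and over hyperbolic classes).

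Two minor points of contact worth noting. First, the paper controls the hyperbolic sum not via ``class numbers and regulators of real quadratic orders'' but more directly: after the level-class interchange it bounds the conjugacy-multiplicity $c_\gamma(d)\le[\SL_2(\Z):\Gamma_1(d)]\ll d^{2+\varepsilon}$, so that $\sum_{d\mid\det(\gamma-I)}c_\gamma(d)d^{-\sigma}\ll N_\gamma^\varepsilon$ for $\sigma>2$, and then invokes the prime geodesic theorem for the remaining sum over $\gamma\in\mathcal H_1$. Your discriminant-organized description is a closely related repackaging but would need the same multiplicity input to go through. Second, the archimedean Plancherel integral $\int_{\mathcal P_k}c(\pi_\infty)^{-s}\,\d m_\infty^{\rm pl}$ is holomorphic only on $\sigma>1$ (the density is $\asymp|\nu|$), not on $\sigma>0$; this is harmless for the proposition, which only concerns $\sigma>2$.
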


An application of Theorem \ref{thm:Tauberian} to $Z^k$ then yields Proposition \ref{prop:counting-law-Maass}.

\subsection{Sieving spectral multiplicities}\label{sec:sieve}

Proposition \ref{prop:analytic-properties-Maass} relies on the Selberg trace formula, which we review in Appendix \ref{sec:STF}, and will require some preliminaries. We begin by a reformulation more suitable to the trace formula: 
\begin{lemma}
\label{lem:sieving}
We have the identity
\begin{equation}
\label{Z-function}
Z^k(s) = \sum_{q \geqslant 1} q^{-s} \sum_{d \mid q} \lambda\left( \frac{q}{d} \right) \sum_{\substack{\pi\in\mathfrak{F}_{\mathcal{P}_k} \\ c(\pi_f) \mid d }}  \dim \pi^{K_1(d)} c(\pi_\infty)^{-s}.
\end{equation}
\end{lemma}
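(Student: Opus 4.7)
The plan is to match Dirichlet series coefficients on both sides after exchanging the order of summation; doing so will identify the kernel $\lambda$ as the function $\mu \star \mu$, whose Dirichlet series equals $\zeta(s)^{-2}$. The purpose of this reshaping is to convert the exact-conductor condition $c(\pi_f) = q$ into the divisibility conditions $c(\pi_f) \mid d$, which are precisely what the Selberg trace formula delivers via the spectral multiplicity $\dim \pi^{K_1(d)}$.

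Starting from $Z^k(s) = \sum_{\pi \in \mathfrak{F}_{\mathcal{P}_k}} c(\pi_f)^{-s} c(\pi_\infty)^{-s}$, I would put the sum over $\pi$ on the outside of the proposed right-hand side. For each fixed $\pi$, setting $c = c(\pi_f)$ and factoring $c(\pi_\infty)^{-s}$ aside, the remaining inner sum is
\[
S(c) := \sum_{q \geq 1} q^{-s} \sum_{\substack{d \mid q \\ c \mid d}} \lambda(q/d) \dim \pi^{K_1(d)}.
\]
The substitution $d = ce$, $q = dm$ is a bijection from pairs $(d,q)$ with $c \mid d \mid q$ to pairs $(e,m)$ with $e, m \geq 1$. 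The double sum then factors:
\[
S(c) = c^{-s} \Bigl( \sum_{m \geq 1} \lambda(m) m^{-s} \Bigr) \Bigl( \sum_{e \geq 1} e^{-s} \dim \pi^{K_1(ce)} \Bigr).
\]

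Next, I would invoke the local oldform dimension formula recalled via Casselman's theorem in \S\ref{sec:Casselman}: for $\pi_p$ of conductor $p^{f_p}$ and $v_p(d) \geq f_p$, one has $\dim \pi_p^{K_{1,p}(p^{v_p(d)})} = v_p(d) - f_p + 1$. Applying this prime-by-prime with $v_p(ce) = v_p(c) + v_p(e)$, the exponent $f_p = v_p(c)$ cancels, and the global dimension simplifies to
\[
\dim \pi^{K_1(ce)} = \prod_p (v_p(e) + 1) = d(e),
\]
where $d(\cdot)$ is the divisor function. Hence the $e$-series equals $\zeta(s)^2$, and with $\lambda = \mu \star \mu$ one gets $S(c) = c^{-s}$; summing back over $\pi$ recovers $Z^k(s)$.

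The only genuine technical point is justifying the rearrangements of the triple sum by absolute convergence. This holds in the half-plane $\Re(s) > 3$ by the crude bound $\dim \pi^{K_1(d)} \leq d(d)$ together with the absolute convergence of $Z^k(s)$ in that region (argued along the lines of the $\GL_1$ case in \S\ref{sec:GL1-case}). Beyond this bookkeeping there is no real obstacle; the lemma is essentially a formal Dirichlet-series identity packaging the Möbius inversion that separates newforms from oldforms.
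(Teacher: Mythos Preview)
Your proof is correct and takes essentially the same approach as the paper's: both rely on Casselman's oldform dimension formula $\dim \pi^{K_1(d)} = \tau(d/c(\pi_f))$ and the identity $\lambda \star \tau = \delta$ (equivalently $\zeta(s)^{-2}\zeta(s)^2 = 1$). The only difference is organizational: the paper phrases the inversion at the level of Dirichlet convolutions of the coefficient functions $Z^k(q,s)$ and $Z^{k,\rm old}(q,s)$, whereas you swap the order of summation and carry out the same cancellation directly at the level of Dirichlet series.
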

\begin{proof}
We separate the sum into fixed values of $c(\pi_f)=q$, obtaining
\[
Z^k(s) =\sum_{q \geqslant 1} q^{-s}  Z^k(q,s), \quad\textrm{where}\quad Z^k(q,s) =\displaystyle \sum_{\substack{\pi \in\mathfrak{F}_{\mathcal{P}_k}\\ c(\pi_f) = q }}  c(\pi_\infty)^{-s}.
\]
We now introduce also a similar series, weighted with old form dimensions:
\begin{align}
\label{weighted-spectral-sum}
Z^{k, \rm old}(q,s) & = \sum_{\substack{\pi \in\mathfrak{F}_{\mathcal{P}_k} \\ c(\pi_f) \mid q }}  \dim \pi^{K_1(q)} c(\pi_\infty)^{-s}.
\end{align}
By Casselman's dimension formulas \cite{casselman_results_1973}, we have  $\dim \pi^{K_1(q)} = \tau(q/c(\pi_f))$, so that rewriting the series $Z^{k,\rm old}(q,s)$ as a sum over constant values of $c(\pi_f)$, we get
\[
Z^{k, \rm old}(q,s) = \sum_{d \mid q} \tau\left( \frac{q}{d} \right) \sum_{\substack{\pi \in\mathfrak{F}_{\mathcal{P}_k} \\ c(\pi_f) = d }}  c(\pi_\infty)^{-s} = \left(\tau \star Z^k (\cdot ,s)\right)(q). 
\]
Let $\lambda = \mu \star \mu$. By M\"obius inversion, we therefore conclude
\[
Z^k(q,s) = \left(\lambda \star Z^{k, \rm old}(\cdot, s)\right)(q) = \sum_{d \mid q} \lambda\left( \frac{q}{d} \right) Z^{k, \rm old}(d,s).
\]
Multiplying by $q^{-s}$ and summing over $q$, we conclude to the claimed equality \eqref{Z-function}. 
\end{proof}

By Remark \ref{rem:admissibility}, the function $h(\nu) = h_s(\nu) = c(\nu)^{-s}$ is admissible in the sense of Definition~\ref{defn-admissible}. We may therefore apply the Selberg trace formula Proposition \ref{prop:STF}, specialized to the congruence quotient $Y_1(q)$ with $h =h_s$. Noting that therefore $J^k_{Y_1(q),\mathrm{cusp}}(h) = Z^{k, \mathrm{old}}(q,s)$, we obtain
\[
Z^{k, \mathrm{old}}(q,s)=   J^k_{Y_1(q), \text{id}}(h) + J^k_{Y_1(q), \text{hyp}} (h) + J^k_{Y_1(q), \text{ell}}(h) + J^k_{Y_1(q), \text{para}}(h) -  J^k_{Y_1(q), \text{cont}}(h).
\]
Summing over the levels yield and sieving by Lemma \ref{lem:sieving}, we get (with obvious notations)
\begin{equation}
\label{eq:Z-splitting}
Z^k (s) = Z_{\text{id}}^k(s) + Z_{\text{hyp}}^k (s) + Z_{\text{ell}}^k(s) + Z_{\text{para}}^k(s)  -  Z_{\text{cont}}^k(s).
\end{equation}

We study separately each term and its analytic properties.

\subsection{Identity contribution}

We begin by addressing the identity contribution.
\begin{lemma}
\label{lem:Z-id}
The term $Z_{\rm id}^k(s)$ extends meromorphically up to $\sigma \geqslant 2+\delta$,  with a unique pole at $s=3$ which is simple and has residue $\xi(2){\rm vol}_{\hat{\mu}_\mathfrak{F}}(\Omega_{\mathcal{P}_k})$. Moreover, for all $\delta \in (0,1)$ and all~$t \in \R$, we have $Z_{\rm id}^k(2+\delta+it) \ll (1+|t|)^\frac{1-\delta}{2}$.
\end{lemma}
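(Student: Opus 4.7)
The plan is to combine the sieve identity of Lemma \ref{lem:sieving} with the explicit expression for the identity contribution in the Selberg trace formula to express $Z^k_{\rm id}(s)$ as a product of an archimedean integral with an explicit Dirichlet series, and then analyze each factor separately.

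First I would apply Proposition \ref{prop:STF} with the admissible test function $h_s(\nu)=c(\nu)^{-s}$ to isolate the identity contribution at each level:
\[
J^k_{Y_1(q),\text{id}}(h_s)= \mathrm{vol}(Y_1(q))\cdot I_k(s),\qquad I_k(s):=\int_{\mathcal{P}_k}c(\pi_\infty)^{-s}\,\d m_\infty^{\rm pl}(\pi_\infty).
\]
The Haar normalizations of Section \ref{sec:Haar} (together with the classical-to-adelic dictionary of the end of Section \ref{sec:conductor-limiting-measure}) give $\mathrm{vol}(Y_1(1))=\mathrm{vol}[\GL_2]=\xi(2)$ and $\mathrm{vol}(Y_1(q))=\varphi_2(q)\cdot \xi(2)$, using $[\SL_2(\Z):\Gamma_1(q)]=\varphi_2(q)$, compatibly with \eqref{eq:vol-K1(pf)}. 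Substituting into \eqref{eq:Z-splitting} and the identity from Lemma~\ref{lem:sieving} yields
\[
Z^k_{\rm id}(s) = \xi(2)\, I_k(s)\sum_{q\geq 1}\frac{(\lambda\star\varphi_2)(q)}{q^s}.
\]

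Next I would factor the Dirichlet series. Since $\lambda=\mu\star\mu$ and $\varphi_2=p_2\star\mu$, the Dirichlet series of $\lambda\star\varphi_2$ is $\zeta(s)^{-2}\cdot\zeta(s-2)\zeta(s)^{-1}=\zeta(s-2)/\zeta(s)^3$, giving the clean factorization
\[
Z^k_{\rm id}(s)= \xi(2)\, I_k(s)\cdot\frac{\zeta(s-2)}{\zeta(s)^3}.
\]
The archimedean factor $I_k(s)$ converges absolutely and defines a holomorphic function on $\sigma>1$: by condition \eqref{cond3} of Definition \ref{defn-c}, the integrand is bounded in modulus by $(1+|\nu|)^{-2\sigma}|\nu|\tanh(\pi\nu)$ (or $\coth$), which is integrable for $\sigma>1$. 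The arithmetic factor $\zeta(s-2)/\zeta(s)^3$ has $\zeta(s)\neq 0$ throughout $\sigma\geq 1$, hence is meromorphic on $\sigma>2$ with a unique pole at $s=3$, simple and of residue $1/\zeta(3)^3$. Combining, $Z^k_{\rm id}$ is meromorphic on $\sigma>2$ with its only pole at $s=3$, of residue
\[
\xi(2)\cdot\frac{I_k(3)}{\zeta(3)^3}=\xi(2)\,\mathrm{vol}_{\hat{\mu}_\mathfrak{F}}(\Omega_{\mathcal{P}_k}),
\]
as desired.

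Finally, for the vertical growth bound on the line $\sigma=2+\delta$ with $\delta\in(0,1)$: on this line $|I_k(s)|\ll 1$ uniformly in $t$, since condition \eqref{cond5} ensures $|c(\nu)^{-s}|=c(\nu)^{-\sigma}$ on $\R$, making $I_k(s)$ dominated by a convergent integral independent of $t$; we have $|\zeta(s)^{-3}|\ll 1$ since $\sigma>1$; and $|\zeta(s-2)|=|\zeta(\delta+it)|\ll (1+|t|)^{(1-\delta)/2}$ by the classical convexity bound for $\zeta$ in the critical strip. Multiplying these estimates gives $Z^k_{\rm id}(2+\delta+it)\ll (1+|t|)^{(1-\delta)/2}$. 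I do not anticipate a genuine obstacle here: the crux is simply the careful bookkeeping of the adelic-to-classical measure comparison and the Dirichlet convolution identity; once these are in place, the analytic continuation and vertical growth are immediate from well-known properties of $\zeta$.
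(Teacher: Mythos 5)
Your proposal follows essentially the same route as the paper's proof: apply the Selberg trace formula's identity term, push through the sieve of Lemma \ref{lem:sieving}, factor the resulting Dirichlet convolution into $\zeta(s-2)/\zeta(s)^3$, and bound the archimedean integral and the zeta factors separately. The convolution bookkeeping ($\lambda\star\varphi_2$) and the convexity estimate for $\zeta(\delta+it)$ are exactly what is used. There is, however, one error you should repair.

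You assert $\mathrm{vol}(Y_1(q))=\xi(2)\varphi_2(q)$ for all $q\geq 1$, ``using $[\SL_2(\Z):\Gamma_1(q)]=\varphi_2(q)$.'' But the hyperbolic volume of $Y_1(q)=\overline{\Gamma_1(q)}\backslash\mathbb{H}$ is governed by the index in $\PSL_2(\Z)$, not in $\SL_2(\Z)$, and for $q\in\{1,2\}$ we have $-I\in\Gamma_1(q)$, so $[\PSL_2(\Z):\overline{\Gamma_1(q)}]=\varphi_2(q)$ rather than $\varphi_2(q)/2$. Concretely, $\mathrm{vol}(Y_1(1))=\pi/3\neq\xi(2)\varphi_2(1)=\pi/6$. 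Consequently your clean factorization
\[
Z^k_{\rm id}(s)=\xi(2)\,I_k(s)\,\frac{\zeta(s-2)}{\zeta(s)^3}
\]
is off by a correction term coming from the levels $d=1,2$. The paper writes this correction as an explicit function $f(s)$, which is an entire, vertically bounded function multiplied by $\zeta(s)^{-2}$ (hence holomorphic and bounded on $\sigma>0$), so it contributes neither a pole at $s=3$ nor to the vertical growth. Your conclusion is therefore unaffected once this term is inserted, but as stated the identity is false and the residue computation is, strictly speaking, unjustified. Include the correction $f(s)$ and observe it is harmless, and the rest of your argument goes through as written.
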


\begin{proof}
Recall that
\begin{align*}
J^0_{Y_1(q), \rm id}(h)&=\frac{\mathrm{vol}\, Y_1(d)}{4\pi} \int_\mathbb{R} c(\nu)^{-s} \nu \tanh(\pi \nu) \d\nu=\mathrm{vol}\,Y_1(d) \int_{\mathcal{P}_0}\frac{\d m^{\rm pl}_\infty(\pi_\infty)}{c(\pi_\infty)^s},\\
J^1_{Y_1(q), \rm id}(h)&=\frac{\mathrm{vol}\, Y_1(d)}{4\pi} \int_\mathbb{R} c(\nu)^{-s} \nu \coth(\pi \nu) \d\nu=\mathrm{vol}\, Y_1(d)\int_{\mathcal{P}_1}\frac{\d m^{\rm pl}_\infty(\pi_\infty)}{c(\pi_\infty)^s},
\end{align*}
where we have used \eqref{eq:P1-P2-Plancherel}. Now, we have
\[
\mathrm{vol}\, Y_1(d)=\mathrm{vol}\left(\PSL_2(\Z)\backslash\mathbb{H}\right)\cdot [\PSL_2(\Z):\overline{\Gamma_1(d)}].
\]
For $d\geqslant 3$ this can be expressed simply \cite[Theorem 4.2.5]{miyake_modular_1989} as
\begin{equation}
\label{eq:index}
\mathrm{vol}\, Y_1(d)=\mathrm{vol}\left(\PSL_2(\Z)\backslash\mathbb{H}\right)\cdot [\PSL_2(\Z):\overline{\Gamma_1(d)}]=\frac{\pi}{3} \cdot \frac{\varphi_2(d)}{2}=\xi(2)\varphi_2(d),
\end{equation}
where $\varphi_2$ is the arithmetical function defined in \eqref{eq:vol-K1(pf)}. 
Summing over the parameters in the expression \eqref{Z-function}, we get an arithmetic sum
\begin{equation*}
\label{arithmetic-sum}
\sum_{q \geqslant 1} q^{-s} \sum_{d \mid q} \lambda\left( \frac{q}{d} \right)  \mathrm{vol} \, Y_1(d)  = \sum_{d \geqslant 1} \frac{\mathrm{vol}\ Y_1(d)}{d^s} \sum_{e \geqslant 1}\frac{\lambda(e)}{e^s} =\xi(2)\frac{\zeta(s-2)}{\zeta(s)^3} + f(s),
\end{equation*}
\noindent which is convergent for $\sigma > 3$.  Here,  $f(s)$ accounts for the fact that \eqref{eq:index} does not hold for the exceptional cases $d \in \{1, 2\}$. The function $f(s)$ can be written as an entire and vertically bounded function multiplied by $\zeta(s)^{-2}$, hence is analytic for $\Re(s)>0$.   
 This yields
\begin{equation*}
\label{id-contribution-maass}
Z_{\text{id}}^k(s) = \xi(2) \frac{\zeta(s-2)}{\zeta(s)^3} \int_{\mathcal{P}_k}\frac{\d m^{\rm pl}_\infty(\pi_\infty)}{c(\pi_\infty)^s} + f(s) \int_{\mathcal{P}_k}\frac{\d m^{\rm pl}_\infty(\pi_\infty)}{c(\pi_\infty)^s} .
\end{equation*}
Moreover, by \cite[(10.10)]{montgomery_multiplicative_2006} we deduce that for all $\delta \in (0, 1)$, on $\sigma = 2 + \delta$ we have
\[
Z_{\rm id}^k (s) \ll (1+|s|)^\frac{1-\delta}{2}.\qedhere
\]
\end{proof}

\subsection{Eisenstein contribution}
\label{sec:Z-Eisenstein}

Next we address the contribution of the continuous spectrum to the trace formula, which is standard and which we recall for completeness.

\begin{lemma}
The term $Z_{\mathrm{cont}}^k(s)$ converges analytically for $\sigma \geqslant 2 + \delta$ for all $\delta >0$, and is uniformly bounded on vertical strips in this region.
\end{lemma}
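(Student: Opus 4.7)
\emph{Proof plan.} My plan is to apply the Selberg trace formula at each level $d$ to extract an explicit expression for $J^k_{Y_1(d),\mathrm{cont}}(h_s)$, bound it polynomially in $d$ uniformly in $t=\Im(s)$, and then sum over levels via the sieving identity of Lemma~\ref{lem:sieving}, absorbing the polynomial growth against the factor $q^{-s}$ when $\sigma \geq 2+\delta$.

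I would first recall (cf.\ Appendix~\ref{sec:STF}) that the continuous contribution has the shape
\[
J^k_{Y_1(d), \mathrm{cont}}(h_s) = -\frac{1}{4\pi}\int_\R h_s(\nu)\,\frac{\varphi_d'}{\varphi_d}\!\left(\tfrac{1}{2}+i\nu\right)\d\nu \;+\; c_d\, h_s(0),
\]
where $\varphi_d$ is the determinant of the scattering matrix for $\Gamma_1(d)$ (in weight $k$) and $c_d$ is tied to the number of cusps $\kappa(d) \ll d^{1+\varepsilon}$. The scattering matrix is explicitly expressible in terms of Gamma factors and ratios of Dirichlet $L$-functions indexed by pairs of characters modulo $d$; combined with standard estimates for $L'/L$ on the critical line, this produces
\[
\left|\frac{\varphi_d'}{\varphi_d}\!\left(\tfrac12+i\nu\right)\right| \;\ll\; \kappa(d)\, \log\!\big(d(1+|\nu|)\big).
\]
By property \eqref{cond3} of Definition~\ref{defn-c}, $|h_s(\nu)| \ll (1+|\nu|)^{-2\sigma}$ uniformly in $t$, and since $2\sigma > 1$ the $\nu$-integral converges absolutely. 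This gives $|J^k_{Y_1(d), \mathrm{cont}}(h_s)| \ll d^{1+\varepsilon}$ uniformly in $t$.

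Feeding this into Lemma~\ref{lem:sieving} and using $|\lambda(n)| \leq d(n)$, I would conclude
\[
|Z_{\mathrm{cont}}^k(s)| \;\leq\; \sum_{q\geq 1} q^{-\sigma} \sum_{d\mid q} |\lambda(q/d)|\, d^{1+\varepsilon} \;\ll\; \sum_{q\geq 1} q^{-\sigma+1+2\varepsilon},
\]
which converges absolutely and uniformly for $\sigma \geq 2+\delta$ provided $\varepsilon < \delta/2$. The hard part will be establishing the uniform polynomial bound on $\varphi_d'/\varphi_d$: unlike the simpler $\Gamma_0(d)$ case, the scattering matrix for $\Gamma_1(d)$ is built from Eisenstein series attached to pairs of Dirichlet characters, so some combinatorial bookkeeping over cusps and character twists will be needed. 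Once that bound is in hand, the rest is a routine polynomial estimate absorbed by the factor $q^{-s}$.
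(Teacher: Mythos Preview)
Your approach is essentially the same as the paper's and is correct in outline: both arguments use Huxley's explicit formula for the scattering determinant of $\Gamma_1(d)$ in terms of completed Dirichlet $L$-functions, derive a bound $J^k_{Y_1(d),\mathrm{cont}}(h_s)\ll d^{1+\varepsilon}$ uniform in $t$, and then sum over levels via the sieving identity to get absolute convergence for $\sigma\geq 2+\delta$.

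The one technical difference worth flagging is how you control $\varphi_d'/\varphi_d(\tfrac12+i\nu)$. You assert a \emph{pointwise} bound $\ll \kappa(d)\log(d(1+|\nu|))$, whereas the paper instead bounds the truncated integral $\int_{-T}^{T}(\varphi_k'/\varphi_k)(\tfrac12+it)\,\d t\ll d^{1+\varepsilon}T\log T$ and then integrates by parts against $h_s$. The paper's route is slightly more robust: the pointwise estimate you invoke is for $L'/L$ on the line $\Re(s)=1$ (not the critical line, as you wrote), and there it is not quite $\ll\log(d(1+|\nu|))$ unconditionally --- a possible Siegel zero for a real character dividing $d$ can make $L'/L(1,\chi)$ as large as $d^{\varepsilon}$, and the pole of $\zeta'/\zeta$ at $s=1$ must be seen to cancel between the two terms $\Lambda'/\Lambda(1\pm 2i\nu)$ at $\nu=0$. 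None of this breaks your argument, since $d^{\varepsilon}$ in place of $\log d$ is harmless for the final bound $d^{1+\varepsilon}$, but you should state the pointwise bound with the correct $d^{\varepsilon}$ loss (or switch to the averaged bound plus integration by parts as the paper does) and note the cancellation at $\nu=0$.
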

\begin{proof}
The continuous contribution of the spectrum $Z_k^{\text{cont}}(s)$ is well-understood. Indeed, Huxley \cite{huxley_scattering_1984} provides an explicit formula for the determinant of the scattering matrix for~$Y_1(q)$. In fact, his setting is limited to $k=0$, but the $k=1$ case is identical, apart from the parity of the characters considered. He proves
\[
\varphi_k(s) = (-1)^\ell \left( \frac{A(q)}{\pi^{\kappa(q)}} \right)^{1-2s} \prod_{j=1}^{\kappa(q)} \frac{\Lambda(2-2s, \bar\chi_j)}{\Lambda(2s, \chi_j)}, 
\]
where $\ell$ is explicitly defined, $\kappa (q)$ is the number of inequivalent cusps in $Y_1(d)$, the $\chi_j$ are Dirichlet characters satisfying $\chi_j(-1)=(-1)^k$ with associated completed $L$-functions~$\Lambda(s, \chi_j)$, and
\[
A(q) = \prod_{\substack{q_1 q_2 \mid q}} \ \prod_{\substack{\chi_1 \mod q_1 \\ \chi_2 \mod q_2 }} q_1 q.
\]
Taking the log-derivative and examining the truncated integral arising in $Z^k_{\rm cont}$, we get 
\[
\frac{1}{4\pi} \int_{-T}^T \frac{\varphi'_k}{\varphi_k}(\tfrac{1}{2} + it) \d t = - \frac{T}{\pi} \log \left( \frac{A(q)}{\pi^{\kappa(q)}} \right) - \frac{1}{\pi}\sum_{j=1}^{\kappa(q)} \int_{-T}^T \frac{\Lambda'}{\Lambda}(1+2it, \chi_j) \d t.
\]

The second term splits into gamma factors and finite-part $L$-functions. By Stirling's formula, the main contribution is $T\log T$ for the gamma factors;  and by standard estimates it is also $T\log T$ on Dirichlet $L$-functions, see e.g. \cite[Chapter 5, Section 2]{risager_automorphic_2003}. Taking into account the size of the summation, we get a bound of $\kappa(q) T\log T$. We also have (see e.g. \textit{ibid.} Lemma 55)  the bound $\kappa(q) \ll q^{1+\varepsilon}$ and $\log A(q) \ll q^{1+\varepsilon}$.
We get that the truncated integral is bounded by 
\[
\frac{1}{4\pi} \int_{-T}^T \frac{\varphi'_k}{\varphi_k}(\tfrac{1}{2} + it) \d t  \ll q^{1+\varepsilon} T\log T.
\]

We can conclude by integration by parts that
\begin{align}
\label{eq:scattering-bound}
\frac{1}{4\pi} \int_{\mathbb{R}} \frac{\varphi'_k}{\varphi_k}(\tfrac{1}{2} + it) h(t) \d t & \ll q^{1+\varepsilon} \int_{\mathbb{R}} t^{2+\varepsilon}c(t)^{-\sigma - 1} \d t,
\end{align}
and this last integral converges for $\sigma > 0$. Moreover, the trace $\tr\,\Phi_k(\tfrac12)$ has been computed in~\cite[Lemma 2]{huxley_scattering_1984} and in particular satisfies $|\tr\,\Phi_k(\tfrac12)| \leqslant 2 q^{1/2}$. Since it is smaller than the bound obtained in \eqref{eq:scattering-bound}, we deduce
\begin{equation}
Z_{\mathrm{cont}}^k(s) \ll \sum_{q \geqslant 1} q^{-\sigma + 1 + \varepsilon} \int_{\mathbb{R}} c(\nu)^{-\sigma} \d\nu,
\end{equation}
so that $Z_{\rm cont}^k(s)$ is analytic for $\sigma \geqslant 2 + \delta$ and is uniformly bounded in vertical strips.
\end{proof}

\subsection{Hyperbolic contribution}
\label{sec:hyperbolic-contribution}

The hyperbolic contribution is the most delicate to bound. To prove the following result, we shall need to take advantage of the sum on levels.

\begin{prop}
The term $Z_{\rm hyp}^k (s)$ converges analytically for $\sigma \geqslant 2+\delta$ and is uniformly bounded in vertical strips in this region.
\end{prop}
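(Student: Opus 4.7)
The plan is to exploit the Möbius-type factor $\lambda=\mu\star\mu$ in \eqref{Z-function}: writing $q=de$ and separating sums gives
\[
Z_{\rm hyp}^k(s) = \zeta(s)^{-2} \sum_{d\geq 1} d^{-s} J^k_{Y_1(d), \rm hyp}(h_s),
\]
since $\sum_e \lambda(e)/e^s = 1/\zeta(s)^2$. As $\zeta(s)^{-2}$ is holomorphic and of polynomial growth for $\sigma\geq 2+\delta$, it suffices to show that the inner sum $S(s):=\sum_d d^{-s} J^k_{Y_1(d),\rm hyp}(h_s)$ converges absolutely and is of controlled growth in vertical strips in this region.

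The first technical input is exponential decay of the Selberg--Harish-Chandra transform $g_s$ of the admissible test function $h_s(\nu)=c(\nu)^{-s}$. By conditions~\eqref{cond2} and~\eqref{cond4} of Definition~\ref{defn-c}, $h_s$ extends holomorphically and without zeros to the strip $|\Im\nu|\leq \tfrac12+\delta$. Shifting the contour in the inverse Fourier integral defining $g_s$ to the lines $\Im\nu=\pm(\tfrac12+\delta)$, incurring no poles, yields
\[
|g_s(x)| \ll_\delta (1+|s|)^{C} e^{-(\tfrac12+\delta)|x|},
\]
in direct analogy with Lemma~\ref{lem:BoundOnFourier} in the $\GL_1$ setting; the Plancherel factors $\tanh(\pi\nu)$ and $\coth(\pi\nu)$ from \eqref{eq:P1-P2-Plancherel} remain harmlessly bounded after the shift.

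Next, I would expand $J^k_{Y_1(d),\rm hyp}(h_s)$ as the standard sum over primitive $\Gamma_1(d)$-hyperbolic conjugacy classes $\{\gamma_0\}$ and their powers, with the usual weights $\log N\gamma_0/|N\gamma^{1/2}-N\gamma^{-1/2}|$ (together with the character $\chi_k$ for $k=1$), and swap the summations over $d$ and $\gamma$. Parametrizing any primitive hyperbolic $\gamma=\left(\begin{smallmatrix}a&b\\c&e\end{smallmatrix}\right)\in \SL_2(\Z)^+$, the levels $d$ with $\gamma\in \Gamma_1(d)$ are precisely the divisors of $f(\gamma):=\gcd(a-1,c,e-1)$, so the inner sum over $d$ produces the uniformly bounded divisor factor
\[
\sum_{d\mid f(\gamma)} d^{-s} = \sigma_{-s}(f(\gamma)) = O(1) \qquad (\sigma > 1).
\]
After correcting for the index $[\mathrm{Cent}_{\SL_2(\Z)}(\gamma):\mathrm{Cent}_{\Gamma_1(d)}(\gamma)]$ that reconciles $\SL_2(\Z)$- and $\Gamma_1(d)$-conjugacy, $S(s)$ is dominated by a sum over primitive hyperbolic $\SL_2(\Z)^+$-conjugacy classes:
\[
S(s) \ll \sum_{\gamma} \frac{\log N\gamma}{N\gamma^{1/2}} \, |g_s(\log N\gamma)| \, \sigma_{-s}(f(\gamma)).
\]
Inserting the decay $|g_s(\log N\gamma)|\ll N\gamma^{-1/2-\delta}$ and using the elementary bound that the number of primitive hyperbolic $\SL_2(\Z)^+$-classes with $N\gamma\leq T$ is $O(T^{1+\varepsilon})$, the series converges absolutely.

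The main obstacles are (i) the careful bookkeeping relating $\Gamma_1(d)$- and $\SL_2(\Z)$-conjugacy uniformly in $d$ when interchanging summations, as a single $\SL_2(\Z)$-class generally splits into several $\Gamma_1(d)$-classes, and (ii) obtaining the claimed uniform boundedness in vertical strips despite the potentially exponential factor $e^{C|\Im s|}$ arising naively from the contour shift in the second step: one may need either to tune the shift width to $|\Im s|$ or to exploit further oscillatory cancellation within the $\gamma$-sum.
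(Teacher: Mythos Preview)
Your overall strategy---pull out the $e$-sum, bound $g_s$ by a contour shift, interchange the $d$- and $\gamma$-sums, and finish with the prime geodesic theorem---matches the paper's. But the step that is supposed to produce the ``uniformly bounded divisor factor $\sigma_{-s}(f(\gamma))=O(1)$'' is where the argument breaks down, and it is exactly the place where the threshold $\sigma>2$ should enter.

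The issue is the one you flag yourself in obstacle (i), but your displayed bound does not actually confront it. For a fixed $\SL_2(\Z)$-conjugacy class $[\gamma]$, the contribution to $J^k_{Y_1(d),{\rm hyp}}$ is \emph{not} governed by whether your chosen representative lies in $\Gamma_1(d)$ (i.e.\ $d\mid f(\gamma)$); it is governed by how many $\Gamma_1(d)$-conjugacy classes $[\gamma]$ splits into, and this count can be as large as the index $[\SL_2(\Z):\Gamma_1(d)]\asymp d^2$. The paper packages this as the multiplicity
\[
c_\gamma(d)=|\{x\in\Gamma_1(d)\backslash\SL_2(\Z): x^{-1}\gamma x\in\Gamma_1(d)\}|,
\]
uses the (necessary) condition $d\mid\det(\gamma-I)$ for non-vanishing, and bounds the $d$-sum by
\[
\sum_{d\mid\det(\gamma-I)} c_\gamma(d)\,d^{-\sigma}\;\ll\;\sum_{d\mid\det(\gamma-I)} d^{2+\varepsilon-\sigma}\;\ll\; N_\gamma^\varepsilon
\]
for $\sigma\geq 2+\delta$. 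Your factor $\sigma_{-s}(f(\gamma))$ omits the weight $c_\gamma(d)$ entirely; as written, your $d$-sum is $O(1)$ already for $\sigma>1$, which should be a warning sign, since this is precisely the contribution singled out in the paper as requiring the full use of $\sigma>2$ via the sum on levels. The centralizer index you mention corrects $\log N_{\gamma_0}$ but does not account for the splitting into multiple $\Gamma_1(d)$-classes.

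On obstacle (ii): your concern is legitimate, and the paper is equally terse here---it simply asserts $g(\nu)\ll e^{-(1-\varepsilon)\nu}$ without tracking the $s$-dependence. A naive contour shift does produce a factor that can grow with $|\Im s|$; controlling it (e.g.\ by more careful stationary phase or, for the model $c(\nu)=1+\nu^2$, by the explicit Bessel representation of the transform) is needed to justify the claimed uniform boundedness in vertical strips, but this is a refinement shared by both arguments rather than a defect specific to yours.
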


\begin{proof}
The function $h(\nu) = c(\nu)^{-s}$ extends up to $|\Im(\nu)| \leqslant 1 - \varepsilon$ for all $\varepsilon>0$. We in particular deduce the estimate on the Fourier transform $g(\nu) \ll e^{-(1-\varepsilon) \nu}$ by the Paley-Wiener theorem.
The worst term in the hyperbolic contribution $J^k_{Y_1(q),\mathrm{hyp}}(h)$ comes from the value $\ell = 1$, so we can concentrate on this case. 

Using the above bound on $g$, and adding back the summation over $q, d$, we get 
\begin{align*}
Z_{\rm hyp}^k (s) & = \sum_{q \geqslant 1} q^{-s} \sum_{d \mid q} \lambda\left( \frac{q}{d} \right) \sum_{\substack{\gamma \in \mathcal{H}_d}} \sum_{\ell \geqslant 1} \frac{\log N_\gamma}{N_\gamma^{\ell/2} - N_\gamma^{-\ell/2}} g(\ell \log N_\gamma)\\
& \ll
\sum_{q \geqslant 1} q^{-\sigma} \sum_{d \mid q}\sum_{\substack{\gamma \in \mathcal{H}_d}}\frac{\log N_\gamma}{N_\gamma^{1/2}} e^{-(1-\varepsilon) \log N_\gamma},
\end{align*}
where $\mathcal{H}_d$ is the set of conjugacy classes of hyperbolic elements in $\Gamma_1(d)$ and $N_\gamma$ the norm of the associated geodesic, as recalled in Appendix \ref{sec:STF}. Note that if $\gamma$ is a conjugacy class in~$\Gamma_1(d)$, we necessarily have $d \mid \det(I-\gamma)$. Denote $c_\gamma(d)$ the multiplicity with which $\gamma$ appears in $\Gamma_1(d)$, namely
\[
c_\gamma(d) = |\{ x \in \Gamma_1(d) \backslash \SL_2(\Z) \ : \ x^{-1} \gamma x \in \Gamma_1(d)\}|.
\]
Switching summations, the above rewrites as
\[
Z_{\rm hyp}^k (s)   \ll \sum_{\gamma \in \mathcal{H}_1} N_\gamma^{-\frac{3}{2} + \varepsilon} \log N_\gamma \sum_{d \mid \det(\gamma-I)} c_\gamma(d) d^{-\sigma} \sum_{e \geqslant 1} e^{-\sigma}.
\]

For the innermost sums, we use the trivial bound $c_\gamma(d) \leqslant [\mathrm{SL}_2(\mathbb{Z}) : \Gamma_1(d)] \ll_\varepsilon d^{2+\varepsilon}$ so that 
\[
\sum_{d \mid \det(\gamma-I)} c_\gamma(d) d^{-\sigma} \ll N_\gamma^\varepsilon
\]

\noindent for all $\sigma \geqslant 2+\delta$ (we bound by the divisor function of $\det(I-\gamma) \ll N_\gamma$). The remaining~$e$-sum of course converges up to $\sigma \geqslant 1+\delta$.

It remains to justify the convergence of the sum over $\gamma$. We appeal to the Weyl law for closed geodesics \cite{bergeron_spectrum_2016}; it implies the estimate
\[
\sum_{\substack{\gamma\in \mathcal{H}_1 \\ N_\gamma \leqslant X}} \log N_\gamma \sim X, 
\]
\noindent and thus we deduce, by cutting in dyadic intervals $N_\gamma \sim Y \leqslant X$, the bound
\[
\sum_{\substack{\gamma \in \mathcal{H}_1 \\ N_\gamma \sim Y}} N_\gamma^{-\frac{3}{2} + \varepsilon} \log N_\gamma \ll_\varepsilon Y^{-\frac{3}{2}+\varepsilon } \sum_{\substack{\gamma \in \mathcal{H}_1 \\ N_\gamma \sim Y}}  \log N_\gamma \ll_\varepsilon Y^{-\frac{1}{2} + \varepsilon}.
\]
Altogether, $Z_{\rm hyp}^k (s)$ is analytic for $\sigma \geqslant 2+\delta$ and is bounded on vertical strips.
\end{proof}

\subsection{Proof of Proposition \ref{prop:analytic-properties-Maass}}

The number of elliptic elements is given by the formulas~\cite[Theorem 4.2.9]{miyake_modular_1989}, and we obtain that the contribution $Z_{\rm ell}^k(s)$ coming from the elliptic elements is uniformly bounded. The contribution $Z_{\rm para}^k(s)$ from the parabolic elements is smaller than the contribution of the continuous spectrum and is addressed using the bounds on $\kappa(q)$ already stated, see Section \ref{sec:Z-Eisenstein}. Finally, the functions~$Z_{\rm ell}^k(s)$ and $Z_{\rm para}^k(s)$ converge uniformly as long as $\sigma>1$. We deduce that they are  holomorphic for $\sigma > 1$ and bounded in vertical strips in this region.

Altogether, each term appearing in \eqref{eq:Z-splitting} continues analytically up to $\sigma>2$ and is uniformly bounded in vertical strips, except for $Z_{\rm id}^k$ that continues meromorphically in this region with a simple pole at $s=3$ and explicit vertical growth. Hence, $Z^k(s)$ satisfies the claimed properties, and that ends the proof of Proposition \ref{prop:analytic-properties-Maass}, and therefore of \ref{prop:counting-law-Maass}.\qed

\begin{appendix}
\section{Selberg trace formula}\label{sec:STF}
We recall the Selberg trace formula for weight zero and weight one Maass forms. 

\subsection{Notation}
Let $\Gamma$ be an arbitrary lattice in $\SL_2(\R)$ and put $Y=\Gamma\backslash\mathbb{H}$. We now introduce some notation related to the spectral and geometric invariants of $Y$ involved in the trace formulae we study.

We endow $Y$ with the hyperbolic metric induced from the Poincar\'e metric $\d x\d y/y^2$ on $\mathbb{H}$. We denote by ${\rm vol}\, Y$ the volume of $Y$ with respect to the corresponding measure.

For $k=0,1$, let $\Phi_k$ be the scattering matrix of the weight $k$ Eisenstein series, as defined in~\cite[p.~87]{iwaniec_spectral_2002} in weight 0 and \cite[pp. 368-369]{hejhal_selberg_1983} for more general weights. Let $\varphi_k$ its determinant.

Denote by $\mathcal{H}_{\rm prim}$ the set of conjugacy classes of primitive hyperbolic elements in $\Gamma$. Any~$\gamma\in~\mathcal{H}$ can be conjugated in $\SL_2(\R)$ to a matrix of the form ${\rm diag}(\lambda,\lambda^{-1})$, where~$|\lambda|>1$. We denote by $N_\gamma=\lambda^2$ and ${\rm sgn}(\gamma)={\rm sgn}(\lambda)$. Denote by $\mathcal{E}_{\rm prim}$ the set of conjugacy classes of primitive elliptic elements in $\Gamma$, and, for every class $\gamma\in\mathcal{E}$, we denote by $m=m_{\gamma}$ its associated order and usually drop the subscript from notation for easier readability. Let $\kappa$ be the number of cusps of $Y$. We also denote by $\psi(s)=\Gamma'(s)/\Gamma(s)$ the classical digamma function. 

We shall use the following class of test functions, taken from \cite[(1.63)]{iwaniec_spectral_2002}. 

\begin{definition}\label{defn-admissible} A function $h(\nu)$ will be called \emph{admissible} if it satisfies the following conditions: there is $\delta>0$ such that
\begin{itemize}
\item[--] it is even on $\mathbb{R}$; 
\item[--] it extends analytically to the strip $|\Im(\nu)| \leqslant \frac{1}{2}+\delta$; 
\item[--] it verifies $h(\nu)\ll (1+|\nu|)^{-2-\delta}$ in the strip $|\Im(\nu)| \leqslant \frac{1}{2}+\delta$.
\end{itemize}
\end{definition}

For an admissible function $h$ we let
\[
g(x)=\frac{1}{2\pi}\int_\R h(\nu)e^{i\nu x}\d x
\]
denote its Fourier transform.

\subsection{Weight zero trace formula}\label{sec:weight-zero-STF}
We now state the Selberg trace formula in the weight zero case. Let $\mathcal{B}_0(Y)=\{\phi_\nu\}_\nu$ be a basis of $L^2_{\rm cusp}(Y)$ consisting of Maass cusp forms with
\[
(\Delta+(1/4+\nu^2))\phi_\nu=0,\quad\textrm{where }\; \nu\in\R\cup i(-1/2,1/2).
\]
The following theorem is due to Selberg; see also \cite[Theorem 10.2]{iwaniec_spectral_2002}.

\begin{prop}[Selberg Trace Formula -- weight zero]\label{prop:STF}
For all admissible function $h$,
\[
J^0_{Y,\mathrm{cusp}}(h) + J^0_{Y,\mathrm{cont}}(h) = J^0_{Y,\mathrm{id}}(h) + J^0_{Y,\mathrm{hyp}} (h) + J^0_{Y,\mathrm{ell}}(h) + J^0_{Y,\mathrm{para}}(h), 
\]
where
\[
J^0_{Y,\mathrm{cusp}}(h)  = \sum_{\phi_\nu\in \mathcal{B}_0(Y)} h(\nu) \quad\textrm{and}\quad
J^0_{Y,\mathrm{cont}}(h) =  \frac{1}{4\pi} \int_{\mathbb{R}} h(\nu) \frac{-\varphi_0'}{\varphi_0}(\tfrac{1}{2}+i\nu) \d \nu + \frac{h(0)}{4} \mathrm{tr}(\Phi_0(\tfrac{1}{2})) 
\]
are the cuspidal and Eisenstein series contributions,
\[
J^0_{Y,\mathrm{id}}(h)  = \frac{\mathrm{vol} \ Y}{4\pi} \int_\mathbb{R} h(\nu) \nu \mathrm{tanh} (\pi \nu) \d \nu
\]
is the identity contribution, and
\begin{align*}
J^0_{Y,\mathrm{hyp}}(h) & = \sum_{\gamma \in \mathcal{H}_{\rm prim}} \sum_{\ell \geqslant 1} \frac{g(\ell \log N_\gamma)}{N_\gamma^{\ell/2} - N_\gamma^{-\ell/2}} \log N_\gamma \\
J^0_{Y,\mathrm{ell}}(h) & = \sum_{\mathcal{E}_{\rm prim}} \sum_{\ell = 1}^{m-1} \left( 2m \sin \tfrac{\pi \ell}{m} \right)^{-1}\int_{\mathbb{R}} h(r) \frac{\mathrm{cosh}\left( \pi r( \left( 1 - \tfrac{2\ell}{m} \right)\right)}{\mathrm{cosh}\pi r} \d r \\
J^0_{Y,\mathrm{para}}(h) & = - \kappa\left(\frac{1}{2\pi} \int_{\mathbb{R}} h(r) \psi(1+ ir) \d r + \frac{h(0)}{4} - g(0) \log 2\right)
\end{align*}
are the contributions from the hyperbolic, elliptic, and parabolic classes.
\end{prop}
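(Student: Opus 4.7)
The plan is to follow Selberg's classical approach: build an integral operator on $L^2(Y)$ from the test function $h$, then compute its trace in two distinct ways and equate them. First I would pass from $h$ to a point-pair invariant $k\colon\mathbb{H}\times\mathbb{H}\to\mathbb{C}$ via the Selberg/Harish-Chandra transform, arranged so that $k$ acts as multiplication by $h(\nu)$ on any Maass form with spectral parameter $\nu$. The admissibility hypotheses on $h$ (evenness, holomorphic extension to $|\Im\nu|\leq\tfrac12+\delta$, and decay $(1+|\nu|)^{-2-\delta}$) are precisely what is required to ensure the resulting point-pair invariant decays fast enough that the automorphic kernel $K(z,w)=\sum_{\gamma\in\Gamma}k(z,\gamma w)$ converges absolutely and defines a trace class operator on $L^2_{\mathrm{cusp}}(Y)$.

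The spectral side is obtained by decomposing $L^2(Y)$ into cuspidal and continuous parts. Testing against an orthonormal basis $\mathcal{B}_0(Y)$ of Maass cusp forms yields $J^0_{Y,\mathrm{cusp}}(h)=\sum h(\nu)$. The continuous contribution comes from the spectral expansion in Eisenstein series $E(z,\tfrac12+i\nu)$; a standard contour shift together with the Maass--Selberg relations would isolate the integral involving $-\varphi_0'/\varphi_0(\tfrac12+i\nu)$, while the boundary contribution at $\nu=0$ produces the $\tfrac14 h(0)\,\mathrm{tr}\,\Phi_0(\tfrac12)$ term.

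The geometric side is extracted by partitioning the sum in $K(z,w)$ into $\Gamma$-conjugacy classes. The identity contributes $\int_Y k(z,z)\,\d\mu(z)$, which the Plancherel/Harish-Chandra inversion formula for $\mathrm{PSL}_2(\mathbb{R})$ converts into $\frac{\mathrm{vol}\,Y}{4\pi}\int h(\nu)\nu\tanh(\pi\nu)\,\d\nu$. Primitive hyperbolic classes, unfolded along their cyclic centralizers, produce the geodesic sum in $g(\ell\log N_\gamma)$ weighted by $N_\gamma^{\ell/2}-N_\gamma^{-\ell/2}$. Primitive elliptic classes unfold analogously against their finite rotation stabilizers, yielding the cosh-integral after a Mellin-type change of variable.

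The main obstacle is the joint regularization of the parabolic and continuous terms: neither the parabolic geometric contribution nor the bare spectral Eisenstein expansion converges on its own near the cusps. I would handle this by truncating the automorphic kernel at height $T$ in each cuspidal zone, separating the divergent pieces coming from the constant terms of the Eisenstein series and from the parabolic stabilizers, and checking their mutual cancellation as $T\to\infty$. The surviving finite residues combine to give the digamma integral, the $h(0)/4$ correction, and the $-g(0)\log 2$ term. This regularization is the delicate step, and for the final verification I would invoke the detailed treatment in \cite[Chapter 10]{iwaniec_spectral_2002}.
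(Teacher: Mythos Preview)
The paper does not actually prove this proposition: it is stated in the appendix with the attribution ``The following theorem is due to Selberg; see also \cite[Theorem 10.2]{iwaniec_spectral_2002}'' and no proof is given. Your sketch is the standard Selberg argument and matches what is carried out in the cited reference, so there is nothing to compare against and your outline is appropriate for a self-contained writeup.
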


\subsection{Weight one trace formula}\label{sec:weight-one-STF}

We now turn to the Selberg trace formula for weight one Maass forms on $Y=\Gamma\backslash\mathbb{H}$. Before doing so, we provide some recollections of the weight one setting.

Let $M_1(Y)$ denote the space of complex-valued functions $\phi$ on $\mathbb{H}$ such that
\[
\phi(\gamma\cdot z)=j_\gamma(z) \phi(z)\quad\textrm{for all } \gamma\in \Gamma,
\]
where $j_\gamma(z)=(cz+d)/|cz+d|$ for $\gamma=\left(\begin{smallmatrix} a & b \\ c & d\end{smallmatrix}\right)$, and which are of moderate growth as $z$ approaches the cusps of $\Gamma$. We may view a function $\phi\in M_1(Y)$ as a section of a line bundle~$\mathcal{L}$ over $Y$ given by $\mathcal{L}=\Gamma\backslash(\mathbb{H}\times \C)$, with $\gamma\cdot (z,s)=(\gamma\cdot z, j_\gamma(z)s)$. Let $L^2(Y, \mathcal{L})$ denote the Hilbert space completion of $M_1(Y)$ with respect to the inner product $\int_Y \phi_1(z)\overline{\phi_2(z)} y^{-2}dxdy$. Let $L^2_{\rm cusp}(Y, \mathcal{L})$ denote the closed subspace of $L^2(Y, \mathcal{L})$ consisting of $\phi$ which vanish at the cusps.

Recall the definition of the weight one Laplacian
\[
\Delta_1=y^2\left(\frac{\partial^2}{\partial x^2}+\frac{\partial^2}{\partial y^2}\right)-iy\frac{\partial}{\partial x}.
\]
Then $-\Delta_1$ is a self-adjoint operator on $L^2(Y,\mathcal{L})$, preserving the cuspidal subspace $L^2_{\rm cusp}(Y, \mathcal{L})$, with spectrum in the interval $[1/4,\infty)$. A \textit{weight one Maass cusp form} is then any eigenfunction of $\Delta_1$ in $L^2_{\rm cusp}(Y, \mathcal{L})$. 

Let $\mathcal{B}_1(Y) = \{\phi_\nu\}_\nu$ be a basis of $L^2_{\rm cusp}(Y,\mathcal{L})$ consisting of weight one cusp forms with
\[
(\Delta_1+(1/4+\nu^2))\phi_\nu=0, \quad\textrm{where }\; \nu\in\R.
\]
The following formula can be found in \cite[Theorem 6.3]{hejhal_selberg_1983}; see also \cite{akiyama_selberg_1988}.

\begin{prop}[Selberg Trace Formula -- weight 1]\label{prop:STF2}
For all admissible function $h$,
\[
J^1_{Y,\mathrm{cusp}}(h) + J^1_{Y,\mathrm{cont}}(h) = J^1_{Y,\mathrm{id}}(h) + J^1_{Y,\mathrm{hyp}} (h) + J^1_{Y,\mathrm{ell}}(h) + J^1_{Y,\mathrm{para}}(h), 
\]
where
\[
J^1_{Y,\mathrm{cusp}}(h)  = \sum_{\phi_\nu\in \mathcal{B}_1(Y)} h(\nu) \quad\textrm{and}\quad
J^1_{Y,\mathrm{cont}}(h) =  \frac{1}{4\pi} \int_{\mathbb{R}} h(\nu) \frac{-\varphi_1'}{\varphi_1}(\tfrac{1}{2}+i\nu) \d \nu +\frac{h(0)}{4} \mathrm{tr}(\Phi_1(\tfrac{1}{2}))
\]
are the cuspidal and Eisenstein series contributions,
\[
J^1_{Y,\mathrm{id}}(h)  = \frac{\mathrm{vol} \ Y}{4\pi} \int_\mathbb{R} h(\nu) \nu \coth (\pi \nu) \d \nu
\]
is the identity contribution, and
\begin{align*}
J^1_{Y,\mathrm{hyp}}(h) & = \sum_{\gamma \in \mathcal{H}_{\rm prim}} \sum_{\ell \geqslant 1} \frac{{\rm sgn}(\gamma)\log N_\gamma}{N_\gamma^{\ell/2} - N_\gamma^{-\ell/2}}  g(\ell \log N_\gamma)\\
J^1_{Y,\mathrm{ell}}(h) & = \sum_{\mathcal{E}_{\rm prim}} \sum_{\ell = 1}^{m-1} \left( 2m \sin \tfrac{\pi \ell}{m} \right)^{-1} \left( \int_{\mathbb{R}} h(r) \frac{\mathrm{sinh}\left( \pi r( \left( 1 - \tfrac{2\ell}{m} \right)\right)}{\mathrm{sinh}\pi r} \d r - i h(0) \right) \\
J^1_{Y,\mathrm{para}}(h) & = -\kappa\left(\frac{1}{2\pi} \int_{\mathbb{R}} h(r) \psi(1+ ir) \d r +\frac{h(0)}{4} - g(0) \log 2 + \int_0^\infty g(u) \frac{1- \mathrm{cosh}(u/2)}{2\mathrm{sinh}(u/2)} \d u\right)
\end{align*}
are the contributions from the hyperbolic, elliptic, and parabolic classes.
\end{prop}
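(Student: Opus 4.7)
The overarching strategy mirrors the classical derivation of the weight-zero trace formula of Proposition \ref{prop:STF}, but with careful attention to the multiplier system $j_\gamma(z) = (cz+d)/|cz+d|$ of weight one. The plan is to compute the trace of an integral operator on $L^2(Y, \mathcal{L})$, determined by the admissible function $h$, in two independent ways and equate the results. I would first construct the weight-one automorphic kernel: for $h$ as given, with Fourier transform $g$, build a weight-one point-pair invariant $k_1(z,w)$ on $\mathbb{H} \times \mathbb{H}$ via the inverse weight-one Harish-Chandra transform on $\SL_2(\R)$, normalized so that the associated convolution operator acts on any weight-one Maass form $\phi_\nu$ with eigenvalue $\frac14 + \nu^2$ by the scalar $h(\nu)$. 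The relevant spherical function here is the weight-one spherical function, whose Plancherel density involves $\nu \coth(\pi\nu)$, reflecting the absence of discrete series in the weight-one tempered spectrum. Summing over $\Gamma$ against the multiplier produces the automorphic kernel $K_1(z,w) = \sum_{\gamma \in \Gamma} k_1(z, \gamma w)\, \overline{j_\gamma(w)}$, which is the Schwartz kernel of the operator $L_h$ on $L^2(Y,\mathcal{L})$.

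The spectral side then evaluates $\int_{\mathcal{F}} K_1(z,z)\, d\mu(z)$ by inserting the spectral decomposition of $L^2(Y,\mathcal{L})$ into cuspidal, (here trivial) residual, and continuous parts. The cuspidal sum gives $J^1_{Y,\mathrm{cusp}}(h)$ directly. The Eisenstein contribution is computed via the Parseval formula for weight-one Eisenstein series followed by the Maass–Selberg relations; after a truncation-limit argument this produces $J^1_{Y,\mathrm{cont}}(h)$, namely the regularized integral of $h(\nu)(-\varphi'_1/\varphi_1)(\tfrac12 + i\nu)$ together with the boundary term $\tfrac{h(0)}{4}\mathrm{tr}\,\Phi_1(\tfrac12)$.

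The geometric side computes the same integral by grouping the $\Gamma$-sum by conjugacy classes and collapsing each term against the centralizer. For the identity, the integrand reduces to $k_1(z,z)$ times $\mathrm{vol}\, Y$, producing the $\coth$-weighted identity contribution via Plancherel inversion. For a primitive hyperbolic $\gamma$, one conjugates to $\mathrm{diag}(\lambda,\lambda^{-1})$ and reduces the integral over the centralizer quotient to a one-dimensional Fourier-type integral yielding $g(\ell\log N_\gamma)/(N_\gamma^{\ell/2}-N_\gamma^{-\ell/2})$; the sign $\mathrm{sgn}(\gamma)$ emerges from the multiplier $\overline{j_\gamma}$ evaluated along the geodesic (it flips sign on each winding when $\lambda<0$). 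For elliptic classes, conjugation to a rotation $r_{\pi\ell/m}$ and an explicit Mellin-type computation of the weight-one resolvent kernel of the disk gives the $\sinh$-ratio (in place of the weight-zero $\cosh$-ratio) and produces the extra $-ih(0)$ correction. Finally, parabolic terms are handled by Eisenstein-type truncation near each cusp and matched against the continuous spectrum; the extra $\int_0^\infty g(u)\frac{1-\cosh(u/2)}{2\sinh(u/2)}\,du$ appears as the weight-one Hejhal boundary correction, arising because the multiplier does not act trivially under the stabilizer of the cusp.

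The principal difficulty lies in the parabolic and continuous-spectrum bookkeeping. Neither the geometric parabolic sum nor the Eisenstein integral along the unitary axis converges absolutely on its own, and only the difference does. One must therefore introduce a simultaneous truncation of $K_1$ near the cusps and of the Eisenstein series via Selberg's or Arthur's truncation, track all boundary contributions — including the terms involving $h(0)$, $g(0)\log 2$, $\mathrm{tr}\,\Phi_1(\tfrac12)$, and the weight-one–specific kernel $(1-\cosh(u/2))/(2\sinh(u/2))$ — and then pass to the limit so that these cancel and reassemble into the stated identity. A complete execution of this program for general multiplier systems is carried out in \cite{hejhal_selberg_1983} and specialized to the weight-one congruence setting in \cite{akiyama_selberg_1988}, which together yield Proposition \ref{prop:STF2}.
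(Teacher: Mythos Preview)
Your sketch is a reasonable outline of the classical derivation of the weight-one Selberg trace formula, and you correctly identify the references \cite{hejhal_selberg_1983} and \cite{akiyama_selberg_1988} where the full details are carried out. However, the paper does not actually prove Proposition~\ref{prop:STF2}: it is stated as a known result, with the sentence ``The following formula can be found in \cite[Theorem 6.3]{hejhal_selberg_1983}; see also \cite{akiyama_selberg_1988}'' serving as the entire justification. So there is nothing to compare on the level of argument---your proposal simply supplies more than the paper does, namely an expository sketch of the mechanism behind the cited references, while the paper is content to invoke them directly.
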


\section{Tauberian theorem}
\label{sec:tauberian}

Let $\mathfrak{X}$ be a discrete set and $c$ a positive function on $\mathfrak{X}$. The following Tauberian theorem provides asymptotics for $N(X) = \{x \in \mathfrak{X} \ : \ c(x) \leqslant X \}$ from the analytic properties of the associated zeta function, defined by
\[
Z_{\mathfrak{X}}(s) = \sum_{x \in \mathfrak{X}} c(x)^{-s}.
\]

\begin{theorem}\label{thm:Tauberian}
Suppose there exist constants $\alpha, \beta, \kappa \in \mathbb{R}_+$, with %%$\alpha > \beta$
$\beta>\alpha$, such that $Z_{\mathfrak{X}}(s)$ satisfies the following properties:
\begin{itemize}
\item it extends meromorphically to $\Re(s) > \alpha$,
\item it has a unique pole at $s=\beta$, which is simple,
\item it has at most polynomial growth in every fixed closed vertical strip within $\Re(s)>\alpha$ and satisfies $Z_{\mathfrak{X}}(s) \ll (1+|s|)^{\kappa}$ on $\Re(s) = \alpha+\delta$, for all small enough $\delta >0$.
\end{itemize}
Then, for all $\varepsilon>0$,
\[
N(X) = \underset{s=\beta}{\mathrm{Res}} \, Z_{\mathfrak{X}}(s)  \cdot \frac{X^\beta}{\beta}  + O_{\varepsilon}\left(X^{\beta-\frac{\beta-\alpha}{\kappa  + 1} + \varepsilon}\right). 
\]
\end{theorem}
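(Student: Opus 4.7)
The plan is to prove Theorem \ref{thm:Tauberian} by a classical Perron-type contour argument: express $N(X)$ as a truncated Mellin transform of $Z_{\mathfrak{X}}(s)$, shift the contour past the pole at $s=\beta$ to the line $\Re(s)=\alpha+\delta$, and optimize the truncation height $T$ to balance the resulting error contributions.

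\smallskip

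\noindent\textbf{Step 1: A priori bound.} First I would establish a weak bound $N(X)\ll_\varepsilon X^{\beta+\varepsilon}$. Since $Z_{\mathfrak{X}}(s)$ converges absolutely for $\Re(s)>\beta$, a Landau-type argument (partial summation) gives this immediately: if $N(X)\geq C X^{\beta+\varepsilon}$ on a sequence tending to infinity, then $Z_{\mathfrak{X}}(\beta+\varepsilon/2)$ would diverge. This bound will be needed to control the truncation error.

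\smallskip

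\noindent\textbf{Step 2: Truncated Perron formula.} Take $c=\beta+1/\log X$ and $T\geq 2$ (to be chosen). The standard effective Perron formula, combined with the a priori bound of Step 1 to handle terms with $c(x)$ close to $X$, yields
\[
N(X)=\frac{1}{2\pi i}\int_{c-iT}^{c+iT} Z_{\mathfrak{X}}(s)\frac{X^s}{s}\,\d s+O_\varepsilon\!\left(\frac{X^{\beta+\varepsilon}}{T}\right).
\]

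\smallskip

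\noindent\textbf{Step 3: Contour shift.} Shift the contour to $\Re(s)=\alpha+\delta$ for some small $\delta>0$. By hypothesis the only singularity in the rectangle is the simple pole at $s=\beta$, whose residue contributes
\[
\underset{s=\beta}{\operatorname{Res}}\, Z_{\mathfrak{X}}(s)\cdot\frac{X^\beta}{\beta},
\]
the anticipated main term. What remains is one shifted vertical integral and two horizontal integrals at height $\pm T$.

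\smallskip

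\noindent\textbf{Step 4: Estimating the shifted vertical integral.} Using the polynomial growth hypothesis $Z_{\mathfrak{X}}(\alpha+\delta+it)\ll (1+|t|)^\kappa$, I bound
\[
\int_{-T}^{T}\bigl|Z_{\mathfrak{X}}(\alpha+\delta+it)\bigr|\,\frac{X^{\alpha+\delta}}{1+|t|}\,\d t\ll X^{\alpha+\delta}\int_{-T}^{T}(1+|t|)^{\kappa-1}\,\d t\ll X^{\alpha+\delta}T^{\kappa}.
\]
The horizontal integrals at $\Im(s)=\pm T$ can be controlled by interpolating polynomially between the trivial bound $O(1)$ at $\Re(s)=c$ and the hypothesis on $\Re(s)=\alpha+\delta$ via a Phragm\'en--Lindel\"of type convexity argument; their contribution is dominated by the previous two errors and is negligible after optimization.

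\smallskip

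\noindent\textbf{Step 5: Optimization.} Collecting the estimates gives
\[
N(X)=\underset{s=\beta}{\operatorname{Res}}\,Z_{\mathfrak{X}}(s)\cdot\frac{X^\beta}{\beta}+O_\varepsilon\!\left(\frac{X^{\beta+\varepsilon}}{T}+X^{\alpha+\delta}T^\kappa\right).
\]
Balancing $X^\beta/T$ against $X^{\alpha+\delta}T^\kappa$ yields the optimal choice $T=X^{(\beta-\alpha-\delta)/(\kappa+1)}$, giving a total error $O_\varepsilon(X^{\beta-(\beta-\alpha-\delta)/(\kappa+1)+\varepsilon})$. Absorbing $\delta$ into $\varepsilon$ delivers the claimed bound.

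\smallskip

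\noindent\textbf{Main obstacle.} Conceptually the argument is entirely standard; the technical subtlety lies in the careful execution of Step 2, namely ensuring the truncation error in Perron's formula is $O(X^{\beta+\varepsilon}/T)$ rather than carrying an extra factor reflecting clustering of $c(x)$ near $X$. This is exactly where the a priori bound from Step 1 is used: without it, the naive truncation error would be too weak to yield a power savings. The handling of the horizontal integrals in Step 4 is a secondary but routine point: one must verify that the convexity-based bound on $Z_{\mathfrak{X}}$ at height $T$ is admissible, which follows from the polynomial growth hypothesis in the statement.
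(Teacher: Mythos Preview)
Your approach via the truncated (sharp) Perron formula is the natural alternative to the paper's method and would, if it went through, land on the same optimization $T=X^{(\beta-\alpha)/(\kappa+1)}$ and the same error term. However, Step~2 contains a genuine gap that the a~priori bound of Step~1 does not close.

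The effective Perron formula produces, in addition to the tail term $O\bigl(X^c Z_{\mathfrak{X}}(c)/T\bigr)=O(X^{\beta+\varepsilon}/T)$, a near-diagonal contribution of the shape
\[
\sum_{|c(x)-X|\leq X/2}\min\Bigl(1,\frac{X}{T\,|c(x)-X|}\Bigr).
\]
The terms with $|c(x)-X|\leq X/T$ alone contribute $N(X+X/T)-N(X-X/T)$. The a~priori bound $N(Y)\ll Y^{\beta+\varepsilon}$ bounds this only by $X^{\beta+\varepsilon}$, not by $X^{\beta+\varepsilon}/T$; obtaining the latter is precisely a short-interval estimate of the strength of the theorem itself, so the argument as written is circular. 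You correctly flag clustering near $X$ as the crux, but the remedy you invoke is not strong enough.

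The paper sidesteps this by smoothing: it convolves the sharp cutoff with a mollifier $\phi_T$ whose Mellin transform $\widehat\phi(s/T)$ decays rapidly, so the inverse Mellin integral runs over the full vertical line with no truncation error and no horizontal segments. It then recovers the sharp count via the monotonicity sandwich $N_T(2^{-1/T}X)\leq N(X)\leq N_T(2^{1/T}X)$, which costs only $M_T(2^{1/T}X)-M_T(2^{-1/T}X)\ll X^\beta/T$ from the main term. This sandwich is exactly the missing ingredient in your Step~2; once you insert it (or, equivalently, pass to a Riesz-type mean), your argument repairs itself and essentially becomes the paper's proof.
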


% A version with $\kappa(1-\delta)/2 as in Djordje's notes for reference:
%\begin{equation}
%N(X) = \underset{s=\beta}{\mathrm{Res}} \, Z(s)  \cdot \frac{X^\beta}{\beta}  + O\left(X^{\beta-\frac{2(\beta-\ alpha)}{\kappa  + 2} + \varepsilon}\right). 
%\end{equation}

\begin{remark}
 Under its assumptions, this result improves upon \cite[Theorem A.1]{chambert-loir_igusa_2010} by making error terms explicit, and improving them upon those deduced from \textit{loc. cit}. We could allow for higher order poles, which would amount to picking up extra powers of $\log X$.
 \end{remark}

\begin{proof}
%\subsection{Smooth functions}

Introduce a smooth function $\phi \in C_c^\infty(\tfrac12, 2)$ normalized so that $\int_0^{\infty}\phi(x) \frac{\mathrm{d}x}{x} = 1$. For~$T \geqslant 2$, define~$\phi_T(x) = T\phi(x^T) \in  C_c^\infty(2^{-1/T}, 2^{1/T})$. Its Mellin transform $\widehat{\phi}_T(s) = \widehat{\phi}(s/T)$ is entire and satisfies, for $s$ in every fixed vertical strip,
\begin{equation}
\label{decay}
\widehat{\phi}_T(s) \ll_A \left( 1 + \frac{|s|}{T} \right)^{-A}.
\end{equation}

Let $\mathbf{1}$ the the characteristic function of $(0,1)$, so that $\widehat{\mathbf{1}}(s) = s^{-1}$ is its Mellin transform. We regularize $\mathbf{1}$ by taking the convolution $\mathbf{1}_T =  \mathbf{1} \star \phi_T$, in particular $\widehat{\mathbf{1}_T} = \widehat{\mathbf{1}}\cdot \widehat{\phi}_T$. Note that we have $0 \leqslant \mathbf{1}_T \leqslant 1$ on $\mathbb{R}_+$ and that $\mathbf{1}_T = \mathbf{1}$ outside $(2^{-1/T}, 2^{1/T})$.
Executing the Mellin inversion $\mathcal{M}^{-1} \mathcal{M} (\mathbf{1}_T) = \mathbf{1}_T$ yields
%\begin{align}
%\mathbf{1}_T(x) &  = \mathcal{M}^{-1}( \mathcal{M}(\mathbf{1}_T))(x) \\
%& \mathcal{M}^{-1} (\mathcal{M}\mathbf{1} \cdot \mathcal{M}\phi_T) (x) \\
%& = \frac{1}{2i\pi} \int_{(\sigma)} x^{-s} \mathcal{M}\mathbf{1}(s)  \mathcal{M}\phi_T(s) ds\\
%& = \frac{1}{2i\pi} \int_{(\sigma)} \frac{x^{-s}}{s} \widehat{\phi}\left( \frac{s}{T} \right) ds,
%\end{align}
%and in particular
%\begin{equation}
%\mathbf{1}_T\left( \frac{c(x)}{X} \right) =  \frac{1}{2i\pi} \int_{(\sigma)} X^s c(x)^{-s} \widehat{\phi}\left( \frac{s}{T} \right) \frac{ds}{s},
%\end{equation}
\begin{align}
\label{mellin-inverse}
\mathbf{1}_T(x) &  = \frac{1}{2i\pi} \int_{(\sigma)} x^{-s} \widehat{\phi}\left( \frac{s}{T} \right) \frac{\mathrm{d}s}{s}.
\end{align}

%\subsection{Smoothed counting function}

Instead of the sharp count $N(X)$, we study
\[
N_T(X) = \sum_{x \in \mathfrak{X}} \mathbf{1}_T \left( \frac{c(x)}{X}\right).
\]
By the above-mentioned properties of $\phi_T$, the smooth count $N_T(X)$ is related to $N(X)$ by 
\begin{equation}
\label{eq:over-under-count}
N_{T}(X2^{-1/T}) \leqslant N(X) \leqslant N_{T}(X2^{1/T}).
\end{equation}

%\subsection{Integral formulation}

Summing \eqref{mellin-inverse} over $x \in \mathfrak{X}$ we deduce that, for $\sigma >\beta$,
\[
N_T(X)= \frac{1}{2i\pi} \int_{(\sigma)} Z_{\mathfrak{X}}(s) X^s \widehat{\phi}\left( \frac{s}{T} \right) \frac{\mathrm{d}s}{s}.
\]

%\subsection{Moving contours}

The function $Z_{\mathfrak{X}}(s)$ is assumed to have meromorphic continuation, with at most polynomial growth in every fixed closed vertical strip, up to $\Re(s) > \alpha$. The vertical rapid decay~\eqref{decay} of $\widehat{\phi}_T$ allows to move the contour up to $\alpha+\delta$, passing the pole located at $s=\beta$. We get
\[
N_T(X) = \mathrm{M}_T(X) + \mathrm{E}_T(X),
\]
where
\[
\mathrm{M}_T(X)  = \underset{s=\beta}{\mathrm{Res}} \, Z_{\mathfrak{X}}(s) \cdot  \widehat{\phi}\left(\frac{\beta}{T}\right) \frac{X^\beta}{\beta}  \qquad \mathrm{and} \qquad
\mathrm{E}_T(X)  = \int_{(\alpha+\delta)} Z_{\mathfrak{X}}(s) X^s \widehat{\phi}\left( \frac{s}{T} \right) \frac{\mathrm{d}s}{s}.
\]
Since $\widehat{\phi}_T = 1 + O(T^{-1})$, we conclude
\begin{equation*}
\label{main-term}
\mathrm{M}_T(X) = \underset{s=\beta}{\mathrm{Res}} \, Z_{\mathfrak{X}}(s) \cdot \frac{X^\beta}{\beta}  + O\left(\frac{X^\beta}{T} \right).
\end{equation*} 
The remaining integral $\mathrm{E}_T(X)$ can be easily dominated by using the assumption on the vertical growth of $Z_{\mathfrak{X}}(s)$:
\begin{equation*}
\label{E}
\mathrm{E}_T(X) \ll \int_{(\alpha+\delta)} (1+|s|)^{\kappa} \left( 1+\frac{|s|}{T} \right)^{-A} \frac{\mathrm{d}s}{|s|}X^{\alpha+\delta} \ll X^{\alpha+\delta} T^{\kappa}.
\end{equation*}

Inserting these asymptotics in \eqref{eq:over-under-count}, we deduce $N_T(X2^{1/T}) -N(X) \ll X^\beta/T + X^{\alpha + \delta} T^\kappa$. Equating these two error terms, we obtain an optimal choice of $T = X^{(\beta-\alpha)/(\kappa + 1)}$, and we are left with the counting law
\[
N(X) = \underset{s=\beta}{\mathrm{Res}} \, Z_{\mathfrak{X}}(s) \cdot\frac{X^\beta}{\beta} + O\left(X^{\beta-\frac{\beta-\alpha - \delta}{\kappa  + 1}}\right).
\]
This finishes the proof.\end{proof}

\section{Reflections on alternative approaches}
\label{sec:alternative-approaches}
In this section we discuss some issues regarding smoothing and spectral inversion of test functions, and comment extensively on the recent work of Jana--Nelson \cite{jana_analytic_2019}. Since this appendix is purely expository and comparative, we shall work in the more general setting of the group $G=\GL_n$ over $\Q$. 
\subsection{Archimedean conductor via approximate invariance}
In \cite{jana_analytic_2019}, Jana and Nelson observe that the archimedean conductor of a generic irreducible $\pi\in G(\R)^{\wedge,{\rm gen}}$, as defined by Iwaniec--Sarnak, can be captured by its approximate invariance properties. More precisely, for $X\geq 1$ and $\tau\in (0,1)$, we denote
\[
K_1(X,\tau)=\left\{\begin{pmatrix} a & b\\ c & d\end{pmatrix}\in\GL_n(\R)\ \Bigg\vert \; a\in\GL_{n-1}(\R), d\in\GL_1(\R), \begin{aligned} |a-I_{n-1}|<\tau,\;\;&\qquad |b|<\tau\\ X|c|<\tau,\;\; &|d-1|<\tau \end{aligned} \right\},
\]
a compact subset of $\GL_n(\R)$. Here, $|\cdot |$ denotes any fixed norm on the appropriate set of matrices. They show that 
%\didier{I think the $\pi$ should be carefully quantified here. Indeed, one of the important points is that we can chose the parameters $\delta_0$ and $\tau_0$ (and $\mu_0$) uniformly in $\pi$. This is unclear in the following statements and discussion, but this is what allows to define a notion of conductor \textit{after} having fixed these constants. }
\begin{enumerate}
\item\label{JN1} for every $\delta>0$ there is $\tau>0$ such that every $\pi\in G(\R)^{\wedge,{\rm gen}}$ has $\delta$-approximate invariants by $K_1(c_{\rm IS}(\pi),\tau)$; 
\item\label{JN2} there is $\delta>0$ such that for every $\tau>0$ there is $\mu>0$ such that \textit{no} $\pi\in G(\R)^{\wedge,{\rm gen}}$ has $\delta$-approximate invariants by $K_1(\mu c_{\rm IS}(\pi),\tau)$.
\end{enumerate}
See Theorems 1 and 2 of \textit{loc. cit.} for the precise statements.\footnote{There is a dependence on these quantifiers on another variable $\theta\in [0,1/2)$ (as well as the $n$ in $\GL_n$) which measures how non-tempered $\pi$ can be. For our applications, we shall apply these properties to $\pi$ which are local components at infinity of cuspidal automorphic representations. The latter are known not only to be generic but also to have parameters whose real part are bounded away from $1/2$. Indeed the bounds of Luo-Rudnick-Sarnak show that $\theta=1/2-(n^2+1)^{-1}$ is admissible. Thus, finally, the values of $\delta,\tau,\mu$ depend only on $n$.} For $\GL_2$, this is an archimedean version of the result of Casselman recalled in \S\ref{sec:Casselman}, later extended to $\GL_n$ by Jacquet--Piatetski-Shapiro--Shalika \cite{jacquet_conducteur_1981}, which characterizes the local $p$-adic conductor via the invariants by the subgroup $K_{1,p}(f)$ of \eqref{eq:K1p-def}.

One could in turn seek to \textit{define} the archimedean conductor by the almost invariance properties of $\pi$ under $K_1(X,\tau)$ listed above. For this we would take a $\delta_0>0$ guaranteed by property \eqref{JN2}, which gives rises to $\tau_0>0$ by property \eqref{JN1}, and then to a $\mu_0>0$ by property \eqref{JN2} again. In this case, any triple $(\delta,\tau,\mu)$ with $\delta\geq\delta_0$, $\tau\leq\tau_0$, $\mu\leq\mu_0$ would continue to satisfy properties \eqref{JN1} and \eqref{JN2}, since they lead to weaker conditions. One would then define $c_{\rm JN}(\pi)$ to be the infimum of the $X\geq 1$ such that $\pi$ admits $\delta_0$-approximate invariants by $K_1(X,\tau_0)$. Property \eqref{JN1} would then amount to the inequality $c_{\rm JN}(\pi)\le c_{\rm IS}(\pi)$, while property~\eqref{JN2} would amount to the inequality $\mu_0 c_{\rm IS}(\pi)\leq c_{\rm JN}(\pi)$. In other words,
\[
c_{\rm JN}(\pi)\asymp c_{\rm IS}(\pi).
\]
This is similar to our condition \eqref{cond3} in Definition \eqref{defn-c}.

\subsection{Use in the trace formula}

A definition of archimedean conductor as suggested above lends itself quite naturally to use in the Arthur--Selberg trace formula (or other types of trace formulae, such as that of Kuznetsov), as has been powerfully demonstrated in the recent thesis of Jana \cite{jana_applications_2020} and subsequent outgrowths. In this usage, one takes as an archimedean test function a smoothened characteristic function of $K_1(X,\tau_0)$, see \cite[Section 8]{jana_analytic_2019}. The latter can be viewed as a bump function about the origin in the Kirillov model of $\pi$; it is the \textit{analytic new vector} in the title of \cite{jana_analytic_2019}. 

In this way, one captures in a much more direct way the condition on $\pi$ of being of archimedean conductor $c_{\rm JN}$ up to $X$ --- more direct than, say, by integrating a Paley-Wiener localizing function $h_\mu$ around $\mu\in\mathfrak{a}^*$ over $c_{\rm IS}\leq X$, as was done for every type of continuous series of representations in \cite{brumley_counting_2018}, following the template of \cite{duistermaat_spectra_1979} for the spherical Weyl law. 

To appreciate the power of the Jana--Nelson approach we highlight two striking advantages:
\begin{enumerate}
\item the first is to observe that $\delta_0$-approximate invariance by $K_1(X,\tau_0)$ captures the condition $c_{\rm JN}(\pi)\leq X$ \textit{regardless of how $\pi$ sits in the Langlands classification}. By contrast, in \cite{brumley_counting_2018}, the argument proceeded only after first fixing discrete induction data;
\item the second, related, observation is that the question of constructing \textit{explicit} test functions for use in the Arthur--Selberg trace function which capture the given archimedean condition is naturally built into this approach. By contrast, in \cite{brumley_counting_2018}, for $n\geq 3$, the authors limited themselves to counting the universal family for $\GL_n$ Maass forms (so spherical at infinity) precisely because of the lack of explicit control on the test functions furnished by the Paley--Wiener theorem of Clozel--Delorme. While it seems possible to use a more direct inversion formula in terms of weighted orbital integrals, which would eliminate the spherical hypothesis, the explicit test function on the group of Jana--Nelson, analogous to those at the finite places, remains quite appealing.
\end{enumerate}

By contrast, it is not clear (to the authors) how one would remove the oldform dimensions at the archimedean place that one would incur through a use of analytic new vectors in the trace formula, as one does at the finite places (see \S\ref{sec:sieve}) when one ``sieves for newforms''. This is closely related to Remark \ref{rem:arch-int}, since the proof of \eqref{eq:local-meas-volume} in \cite[Section 6.1]{brumley_counting_2018} could be successfully mimicked given such an archimedean sieving procedure.

\subsection{Smoothing}
Regardless of which approach one follows to smoothly capture the bounded archimedean conductor condition, when it comes to the desired result for understanding the asymptotic size of $|\mathfrak{F}(Q)|$ one can either content oneself with a smooth count or attempt to convert the smooth count to a sharp count on $|\mathfrak{F}(Q)|$. We compare the two below.

\subsubsection{Smoothly counting}
Leaving the count as a smooth one would alleviate many of the technical problems encountered in \cite{brumley_counting_2018}, as it would in fact do for any (higher rank) Weyl law, going back to \cite{duistermaat_spectra_1979}. Moreover, in practice, it is often just as useful to have a smoothly weighted count as it is to have a sharp count.

The archimedean test functions of Jana--Nelson would appear to be well-suited for obtaining a smoothened version of Theorem \ref{thm}, and would even allow for a treatment using only compactly supported test functions. We have not pursued this.

\subsubsection{Sharply counting}
Asking for a sharp count on $|\mathfrak{F}(Q)|$ requires a desmoothing procedure. Note that even if one does not require splitting $\pi$ into representation type, one nevertheless must run the trace formula one level at a time.

When this desmoothing is done for every given level, the error terms one incurs in certain ranges can only yield logarithmic gains. For example, when the level $q$ is large and the archimedean conductor is small, one can only obtain a savings of size $(\log q)^{-1}$; see \cite[\S 3.1]{brumley_counting_2018} for a more in depth discussion. It is precisely at this scale (bounded archimedean conductor) where the precise shape of the archimedean test function becomes immaterial. The work of Jana--Nelson therefore does not seem to alleviate this difficulty.

The other possibility is to desmooth the count only once, after having executed the sum on levels. The approach that we follow in this paper can be interpreted in this way. Indeed, one can smoothly approximate the archimedean test function $c(\nu)^{-s}$ by a Paley--Wiener function of exponential type $R$, and allow $R$ tend to infinity as the final step. It is not clear whether the framework of Jana--Nelson can be similarly adapted to give a sharp count.
\end{appendix}

\subsection*{Acknowledgments} 

Didier Lesesvre was a visiting scholar at the Chinese University of Hong Kong during part of this work, and is particularly grateful to Charles Li for his support and interesting discussions.

\bibliographystyle{acm}
\bibliography{biblio.bib}

\end{document}